\documentclass[12pt,reqno]{amsart}
\usepackage[utf8]{inputenc}

\usepackage{amsfonts, amssymb, amsmath, amsthm, mathtools, thmtools}
\usepackage{mathrsfs} 
\usepackage{latexsym}
\usepackage{enumerate}
\usepackage{multicol}
\usepackage{times}
\usepackage{verbatim}
\usepackage{tikz-cd}
\usepackage{fullpage}
\usepackage{here}
\usepackage{url}
\usepackage{csquotes}
\usepackage{placeins}
\usepackage{epic}
\usepackage{graphicx}
\usepackage{epstopdf}
\usepackage{pstricks}
\usepackage{pst-plot}
\usepackage{tikz}
\usepackage{xcolor}
\usepackage{subcaption}
\newcommand{\Chat}{\widehat{\C}}
\newcommand{\inv}{\iota}
\usetikzlibrary{shapes.geometric, positioning, calc}

\newenvironment{poc}{\begin{proof}[Proof of claim]}{\end{proof}}

\input xypic
\xyoption{all}
\xyoption{poly}

\usepackage[colorlinks=true]{hyperref}
\hypersetup{citecolor=blue, linkcolor=blue}

\usepackage[capitalise]{cleveref}

\crefname{equation}{}{}
\crefname{figure}{{\sc Figure}}{{\sc Figure}}
\crefname{subsection}{Subsection}{Subsections}

\usetikzlibrary{matrix,arrows,decorations.pathmorphing}

\usepackage[euler]{textgreek}
\usepackage{tikz,tikz-cd}
\usepackage[colorinlistoftodos, textwidth = 2.3cm]{todonotes}

\newtheorem{theorem}{Theorem}[section]

\newtheorem{lemma}[theorem]{Lemma}

\newtheorem{conjecture}[theorem]{Conjecture}
\newtheorem{claim}[theorem]{Claim}

\newtheorem*{claim*}{Claim}

\theoremstyle{definition}

\newcommand{\C}{{\mathbb C}}
\newcommand{\F}{{\mathbb F}}

\usepackage{tabstackengine}
\stackMath

\numberwithin{equation}{section} 
\numberwithin{figure}{section}
\numberwithin{table}{section}

\title{Multiplicative irreducibility of shifted multiplicative subgroups}
\author{Seoyoung Kim}
\address{Departement
Mathematik und Informatik, Universit\"at Basel, Spiegelgasse 1, 4051 Basel, Switzerland}
\email{seoyoung.kim@unibas.ch}
\author{Chi Hoi Yip}
\address{School of Mathematics\\ Georgia Institute of Technology\\ GA 30332\\ United States}
\email{cyip30@gatech.edu}
\author{Semin Yoo}
\address{Discrete Mathematics Group \\ Institute for Basic Science \\ 55 Expo-ro Yuseong-gu, Daejeon 34126 \\ South Korea}
\email{syoo19@ibs.re.kr}
\subjclass[2020]{11P70, 11B30}
\keywords{multiplicative subgroup, multiplicative decomposition, Stepanov's method}

\begin{document}

\begin{abstract}
In a recent breakthrough, Kalmynin resolved conjectures of Lev--Sonn and S\'{a}rk\"{o}zy on additive decompositions of multiplicative subgroups of prime fields. In this paper, inspired by a related conjecture of S\'{a}rk\"{o}zy, we prove multiplicative analogues of Kalmynin's results. We show that for every proper multiplicative subgroup \(G\), the shifted set \((G-1)\setminus\{0\}\) cannot be written as a product set nontrivially, addressing a conjecture of S\'{a}rk\"{o}zy. In addition, we prove that no nonzero shift of any coset of a proper multiplicative subgroup is a ratio set of the form \(A/A\). Our results substantially sharpen previous theorems of Shkredov and the authors.
\end{abstract}

\maketitle

\section{Introduction}

Throughout the paper, let $p$ be a prime, $\F_p$ the finite field with $p$ elements, and $\F_p^*=\F_p\setminus \{0\}$. For two subsets $A,B$ of $\F_p$, we define their \emph{sumset} $A+B=\{a+b: a \in A, \ b\in B\}$ and \emph{product set} $AB=\{ab:a\in A, \ b \in B\}$. Similarly, we can define their \emph{difference set} $A-B=\{a-b:a\in A, \ b\in B\}$  and \emph{ratio set} $A/B=\{a/b: a\in A, \ b\in B\}$ (provided that $0\notin B$). Following \cite{LS17, S12}, we use $\mathcal{R}_p$ to denote the set of nonzero squares in $\F_p$.

A central theme in arithmetic combinatorics and analytic number theory is the interplay between addition and multiplication. In particular, there is a substantial body of literature on additive and multiplicative decompositions of sets possessing various arithmetic structures. A celebrated conjecture in this direction is the so-called \emph{inverse Goldbach problem}, due to Ostmann \cite{Ostmann}, which states that the set of primes cannot be written as a nontrivial sumset (with finitely many exceptions allowed). The inverse Goldbach problem is still open, and we refer to the best-known progress by Elsholtz--Harper \cite{EH15} and Shao \cite{Shao16}. Similarly, Erd\H{o}s conjectured that any ``small perturbations" of the set of perfect squares cannot be written as a nontrivial sumset; we refer to the best-known progress due to S\'ark\"ozy and Szemer\'edi~\cite{SS65}. The multiplicative analogues of these two questions are also well-studied. Elsholtz \cite{E08} showed that the set of shifted primes cannot be written as a nontrivial product set (with finitely many exceptions allowed). More recently, Hajdu--S\'{a}rk\"{o}zy \cite{HS20} and the second author \cite{Y26} studied multiplicative decompositions of ``small perturbations" of the set of shifted $k$-th powers. 

In this paper, we study finite field analogues of these questions. In particular, we study certain multiplicative decompositions of shifted multiplicative subgroups of finite fields. To motivate these questions and our main results, we begin with additive versions of these questions. The following well-known conjecture is due to S\'{a}rk\"{o}zy \cite{S12}. 

\begin{conjecture}[{S\'{a}rk\"{o}zy \cite{S12}}]\label{conj:additiveSarkozy}
Let $p$ be a large enough prime. Then $\mathcal{R}_p$ admits no nontrivial additive decomposition, that is, there are no two subsets $A,B$ of $\F_p$ with $|A|,|B|\geq 2$, such that \[A+B=\mathcal{R}_p.\]
\end{conjecture}

It is natural to consider the analogue of this conjecture over multiplicative subgroups of finite fields. In particular, the following generalization of S\'{a}rk\"{o}zy's conjecture is widely believed:
\begin{conjecture}\label{conj:additiveSarkozy2}
Let $d\ge 2$ be fixed. Then for all sufficiently large prime powers $q\equiv 1 \pmod d$, 
the multiplicative subgroup $G$ of $\F_q$ of index $d$ admits no nontrivial additive decomposition.
\end{conjecture}

These two conjectures have been studied extensively; see for example
\cite{S13, Sh14, S16, LS17, S20, HP, Y24, Y25, Kalmynin}. In a breakthrough work of 2021, Hanson and Petridis \cite{HP} proved the following theorem using Stepanov's method \cite{S69}.
\begin{theorem}[Hanson and Petridis \cite{HP}]\label{thm:HP}
Let $p$ be a prime and $G$ be a proper multiplicative subgroup of $\F_p$. If $A,B$ are subsets of $\F_p$ such that $A+B \subseteq G \cup \{0\}$, then $$|A||B|\leq |G|+|(-A) \cap B|.$$ 
\end{theorem}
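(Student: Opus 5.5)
We prove the statement by Stepanov's method: we build one auxiliary polynomial that vanishes to high order at every $b\in B$, and then compare its number of roots (with multiplicity) against its degree. Put $n=|G|$, so that $G\cup\{0\}=\{x: x^{n+1}=x\}$ and $n\le (p-1)/2$. The hypothesis and the conclusion are both symmetric under swapping $A$ and $B$, because $|(-A)\cap B|=|A\cap(-B)|$. So we may assume $m:=|A|\le |B|$. We first check that $n+m-1<p$. If not, then $m\ge p-n\ge (p+1)/2$, hence $|A|+|B|>p$, and every $x$ can be written as $a+b$ (the sets $A$ and $x-B$ must intersect). That would give $A+B=\F_p\not\subseteq G\cup\{0\}$, which is impossible since $G$ is proper. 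The case $m=1$ is trivial, because then $|B|=|A+B|\le n+1$ and $|(-A)\cap B|\ge 1$ whenever $0\in A+B$. So we also assume $m\ge 2$.

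Write $A=\{a_1,\dots,a_m\}$. The polynomials $(x+a_i)^{m-1}$ for $1\le i\le m$ are linearly independent: their coefficient matrix is a Vandermonde matrix in the distinct $a_i$, rescaled by binomial coefficients that are nonzero mod $p$ since $m-1<p$. Hence they span the polynomials of degree at most $m-1$, and we can choose constants $c_i$ with $\sum_i c_i(x+a_i)^{m-1}=1$. Comparing coefficients of $x^{m-1-j}$ gives $\sum_i c_ia_i^{j}=0$ for $j<m-1$ and $\sum_i c_ia_i^{m-1}=1$. Now define
\[
F(x)=\sum_{i=1}^m c_i(x+a_i)^{n+m-1}-1 .
\]
The coefficient of $x^{n+m-1-j}$ in $F$ is $\binom{n+m-1}{j}\sum_i c_ia_i^j$. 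This vanishes for $j<m-1$ and equals $\binom{n+m-1}{m-1}\neq 0$ for $j=m-1$, the latter because $n+m-1<p$. Therefore $F\ne 0$ and $\deg F=n$.

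Next we count multiplicities using Hasse derivatives $E^{(r)}$. For $0\le r\le n+m-1$,
\[
E^{(r)}F(x)=\binom{n+m-1}{r}\sum_i c_i(x+a_i)^{n+m-1-r}-[r=0].
\]
\begin{itemize}
\item Suppose $b\in B\setminus(-A)$. Then $(b+a_i)^n=1$ for every $i$. So for $r\le m-1$ we get $E^{(r)}F(b)=\binom{n+m-1}{r}\sum_ic_i(b+a_i)^{m-1-r}-[r=0]$. Since $\sum_i c_i(x+a_i)^{m-1-r}$ is, up to the factor $\binom{m-1}{r}$, the $r$-th Hasse derivative of the constant $1$, this vanishes for $r\le m-1$. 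Hence $b$ is a root of $F$ of multiplicity at least $m$.
\item Suppose $b\in B\cap(-A)$, say $b=-a_{i_0}$. The terms with $i\ne i_0$ still satisfy $(b+a_i)^n=1$. The term $i_0$ contributes $0$ to $E^{(r)}F(b)$, whereas the identity above would require it to contribute $c_{i_0}\cdot 0^{m-1-r}$. These two agree for all $r\le m-2$. So $b$ is a root of multiplicity at least $m-1$.
\end{itemize}

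The step needing the most care is this second case, the bookkeeping at points of $B\cap(-A)$. One has to confirm that the defect there is exactly one order of vanishing, and not more. This is precisely what makes the error term $|(-A)\cap B|$ rather than $|A|\,|(-A)\cap B|$. Granting it, since $F\neq0$ has degree $n$,
\[
m\,|B\setminus(-A)|+(m-1)\,|B\cap(-A)|\le n ,
\]
that is, $|A||B|-|(-A)\cap B|\le |G|$, which is the claimed inequality.
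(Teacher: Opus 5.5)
Your argument is correct and is exactly the Hanson--Petridis application of Stepanov's method that the paper cites without proof. It is also the additive counterpart of the construction recalled in Section~\ref{sec:mHP}: a Vandermonde system for the $c_i$, an auxiliary polynomial built from $(\,\cdot\,)^{|G|+|A|-1}$, multiplicity at least $|A|$ at each $b\in B$ and at least $|A|-1$ at the exceptional ones, then comparison with the degree. Your second bullet already proves the lower bound $|A|-1$ on the multiplicity at $b\in B\cap(-A)$, which is all the count uses, so nothing remains to be ``granted''.
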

Together with Ford's theorem \cite{F08} on the distribution of divisors of shifted primes, Hanson and Petridis showed that Theorem \ref{thm:HP} implies an asymptotic version of \cref{conj:additiveSarkozy} (see \cite[Corollary 1.4]{HP}). 

In a very recent breakthrough, Kalmynin \cite{Kalmynin} used an ingenious idea to resolve Conjecture~\ref{conj:additiveSarkozy} and made substantial progress towards \cref{conj:additiveSarkozy2} over prime fields. More precisely, he proved the following theorem:
\begin{theorem}[Kalmynin \cite{Kalmynin}]\label{thm:Kalmynin}
Let $p$ be a prime and $G$ be a proper multiplicative subgroup of $\F_p$. If $A,B$ are subsets of $\F_p$ such that $A+B=G$ and $|A|,|B|\geq 2$, then $|G|$ is a perfect square and $|A|=|B|=\sqrt{|G|}$. Moreover, when $G=\mathcal{R}_p$, such sets $A,B$ do not exist.
\end{theorem}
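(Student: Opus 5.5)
The plan has three steps: first use Theorem~\ref{thm:HP} to force the decomposition to be direct, then turn directness into polynomial identities over $\F_p$, and finally compare degrees to get $|A|=|B|$. Only the last step needs a genuinely new idea.

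\emph{Step 1: directness.} Suppose $A+B=G$ with $|A|,|B|\ge 2$. Since $0\notin G$, no $a\in A$, $b\in B$ satisfy $a+b=0$, so $(-A)\cap B=\emptyset$. Theorem~\ref{thm:HP} then gives $|A||B|\le |G|$. Conversely, $A+B=G$ forces $|A||B|\ge |G|$. Hence $|A||B|=|G|$, and every $g\in G$ has exactly one representation $g=a+b$. Equivalently, $(A-A)\cap(B-B)=\{0\}$. Since $G$ is a group, for every $\lambda\in G$ we also get a direct decomposition $\lambda A+\lambda B=G$.

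\emph{Step 2: polynomial identities.} Write $n=|G|$, so $n\mid p-1$. Directness gives
\[
\sum_{a\in A}\sum_{b\in B}(a+b)^k=\sum_{g\in G}g^k \quad\text{for all } k\ge 0,
\]
and the right-hand side is $0$ unless $n\mid k$, in which case it equals $n$. Let $S_A(j)=\sum_{a\in A}a^j$ and define $S_B(j)$ similarly. Multiplying by $t^k/k!$ and summing over $k<p$ yields
\[
F_A(t)F_B(t)\equiv F_G(t)\pmod{t^p},
\]
where $F_X(t)=\sum_{x\in X}e^{xt}$ is truncated below degree $p$. The key point is that $F_G$ has nonzero coefficients only in degrees divisible by $n$. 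This is a strong rigidity constraint on $F_A$ and $F_B$.

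A cleaner equivalent formulation uses polynomials. Directness means that for each $a\in A$, the polynomial $\prod_{b\in B}(x-a-b)$ divides $x^n-1$, and these factors are pairwise coprime as $a$ varies. So
\[
\prod_{a\in A}\prod_{b\in B}(x-a-b)=x^n-1 .
\]
In particular, all elementary symmetric functions of the multiset $A+B$ of degree $1,\dots,n-1$ vanish.

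\emph{Step 3: $|A|=|B|$, the main obstacle.} The aim is to show that the identity above can only hold when $|A|=|B|$. Set $f_B(x)=\prod_{b\in B}(x-b)$. The identity reads
\[
\prod_{a\in A} f_B(x-a)=x^n-1 .
\]
The first thing to try is a derivative or Wronskian-type argument in the spirit of Stepanov's method. One differentiates, compares the multiplicity of vanishing at $0$ of $x^n-1$ minus the constant term, and bounds the degree of an auxiliary polynomial built from $f_B(x-a)$ and its shifts. The expectation is that this gives $|B|\le |A|$. Using the same identity with the roles of $A$ and $B$ swapped would give $|A|\le|B|$, hence $|A|=|B|=\sqrt n$.

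If that fails, the fallback is to exploit the family of decompositions $\lambda A+\lambda B=G$ for $\lambda\in G$. One would apply Theorem~\ref{thm:HP} to pairs such as $(A,\lambda B)$ or $(A,-A+\cdots)$ to obtain the matching reverse inequality. This step is where the real work lies.

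\emph{The case $G=\mathcal{R}_p$.} Here $n=(p-1)/2$ must be a perfect square $m^2$. The plan is to show that the symmetry of $\mathcal{R}_p$ contradicts the rigid structure just obtained. If $p\equiv 1\pmod 4$, then $-1\in\mathcal{R}_p$, so $-A-B=\mathcal{R}_p$ is another direct decomposition. If $p\equiv 3\pmod 4$, then $-\mathcal{R}_p$ is the set of nonsquares. In either case, one feeds this into the power-sum identities of Step 2 at $k=n$, using $\sum_{g\in\mathcal R_p} g^{n}=n$ and the parity of $m$, to reach a contradiction.
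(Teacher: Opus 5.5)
The paper does not prove this theorem; it quotes it from Kalmynin. Your proposal therefore has to stand on its own, and it has a genuine gap exactly where the content of the theorem lies.

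\textbf{Steps 1 and 2.} These are correct, but they only establish, and then restate, that the decomposition is direct: $|A||B|=|G|$, equivalently $\prod_{a\in A}\prod_{b\in B}(x-a-b)=x^{|G|}-1$. This is an immediate consequence of Theorem~\ref{thm:HP}. The whole difficulty is to pass from it to $|A|=|B|$.

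\textbf{Step 3.} It contains no argument for $|A|=|B|$. The sentence ``the expectation is that this gives $|B|\le|A|$'' does not specify an auxiliary polynomial, its vanishing orders, or the degree comparison. The fallback also cannot work as described:
\begin{itemize}
\item $A+\lambda B$ is in general not contained in $G\cup\{0\}$; only $\lambda A+\lambda B$ is.
\item Theorem~\ref{thm:HP} applied to the pairs $(\lambda A,\lambda B)$ just returns $|A||B|\le|G|$ again. That bound is symmetric in the two sets and is already an equality, so it can never separate $|A|$ from $|B|$.
\end{itemize}
What is missing is a way to extract an asymmetric numerical relation from the rigid equality case. For instance, in equality the Hanson--Petridis polynomial $-1+\sum_i c_i(x+a_i)^{|A|-1+|G|}$ must equal $C\prod_{b\in B}(x-b)^{|A|}$, and one has to get more out of this factorization than its degree.

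Be aware that the most direct additive transcription of the paper's Section~4 does not suffice. Comparing the $|A|$-th and $(|A|+1)$-th derivatives at $b\in B$ and summing against the weight $b$ gives $|A|\binom{|B|}{2}=\frac{|G|-1}{|A|+1}\sum_{a,b}\frac{b}{a+b}$. Combined with the same relation with $A$ and $B$ swapped, this yields only the tautology $|G|(|G|-1)=|G|(|G|-1)$. The reason is that $\sum_{a,b}b/(a+b)$ is not a function of the elements of $A+B$, unlike $\sum_{a,b}\lambda/(ab+\lambda)=\lambda\sum_{g\in G\setminus\{\lambda\}}g^{-1}$ in the multiplicative case, so it cannot be evaluated from $G$.

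\textbf{The case $G=\mathcal{R}_p$.} This is likewise only a plan.
\begin{itemize}
\item When $p\equiv 1\pmod 4$, the decomposition $-A-B=\mathcal{R}_p$ is just the dilation by $\lambda=-1\in G$ that you already noted.
\item When $p\equiv 3\pmod 4$, $-A-B$ decomposes the set of nonsquares, which is not a subgroup, so it gives no new constraint.
\item The identity $\sum_{a,b}(a+b)^{|G|}=|G|$ is one of the identities already encoded in Step~2.
\end{itemize}
Those identities are satisfied by genuine decompositions of other subgroups of square order. For example, $\{0,-1-i\}+\{1,i\}=\{\pm1,\pm i\}$ in $\F_p$ with $i^2=-1$, $p\equiv 1\pmod 4$, $p>5$. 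So ruling out $\mathcal{R}_p$ requires input specific to index $2$. You give no mechanism by which the parity of $m$ would produce a contradiction; $m$ is odd for $p=19,163$ and even for $p=73$.
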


To prove \cref{thm:Kalmynin}, he employed a sophisticated application of Stepanov's method, building on the auxiliary polynomial introduced by Hanson and Petridis~\cite{HP}. He also noted, in a talk at the Number Theory Web Seminar, that the techniques used to resolve \cref{conj:additiveSarkozy} do not appear to extend to general multiplicative subgroups.
\smallskip

There are also other variants of \cref{conj:additiveSarkozy} and \cref{conj:additiveSarkozy2}. For example, the restricted sumset variant of Conjecture~\ref{conj:additiveSarkozy} was confirmed by Shkredov \cite{Sh14} and the second author \cite{Y25} for all finite fields $\F_q$ with $q$ odd and $q>13$.

In the same paper, using similar techniques, Kalmynin \cite{Kalmynin} resolved the following difference set variant of Conjecture~\ref{conj:additiveSarkozy} proposed by Lev and Sonn \cite{LS17}. 

\begin{conjecture}[Lev and Sonn \cite{LS17}]\label{conj:LS}
Let $p$ be a large enough prime. Then there is no subset $A$ of $\F_p$ such that 
\[A-A=\mathcal{R}_p \cup \{0\}.\]
\end{conjecture}

Kalmynin \cite{Kalmynin} proved the following generalization of \cref{conj:LS} for multiplicative subgroups of prime fields. We note that the case $|G|\leq p^{4/5-o(1)}$ in the following theorem has been proved by Shkredov \cite{S16}.

\begin{theorem}[Kalmynin \cite{Kalmynin}]\label{thm:LS}
Let $p$ be a prime and let $G$ be a proper multiplicative subgroup of $\F_p$ such that $|G|\notin \{2,6\}$. Then for each subset $A$ of $\F_p$, we have \[A-A \ne G \cup \{0\}.\]
\end{theorem}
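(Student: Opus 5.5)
Since \cref{thm:LS} is Kalmynin's result, quoted here from \cite{Kalmynin}, I will sketch how I would reprove it with the tools above. Suppose $A-A=G\cup\{0\}$ and write $k=|A|$, $n=|G|$. The first step is to extract structure from the hypothesis. Because $A-A$ is symmetric, $-1\in G$, so $n$ is even. Also $A+(-A)\subseteq G\cup\{0\}$, so \cref{thm:HP} applied with $B=-A$ gives $k^2\le n+|(-A)\cap(-A)|=n+k$. Conversely, $A-A$ has at most $k(k-1)+1$ elements, so $n\le k(k-1)$. Together these force $n=k(k-1)$, and every nonzero element of $G$ is a difference $a-a'$ in \emph{exactly one} way. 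In other words, $A$ is a Sidon-type set whose difference multiset is $G\cup\{0\}$ with each element of $G$ occurring once.

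The next step is to use the $G$-invariance. For every $g\in G$ the set $gA$ satisfies $gA-gA=G\cup\{0\}$ with unique representations as well. So the ``difference function'' $1_A*1_{-A}=k\delta_0+1_G$ is preserved when $A$ is replaced by $gA$. Passing to Fourier transforms, $|\widehat{1_A}(\xi)|^2=k+\widehat{1_G}(\xi)$ for all $\xi$. Here $\widehat{1_G}(\xi)$ takes the Gauss-period values $\eta_j$, one on each coset of $G$ among the nonzero $\xi$. Hence $k+\eta_j\ge0$ for every $j$, and $\sum_j n(k+\eta_j)=pk-k^2$ by Parseval, i.e.\ $p-1=dn$ with $d=(p-1)/n$. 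Combining this with $n=k(k-1)$ gives $p=dk(k-1)+1$.

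The final step, which I expect to be the main obstacle, is to rule out these parameter families. I would follow Kalmynin's strengthening of the Hanson--Petridis auxiliary polynomial. Consider $F(x)=\sum_{a\in A}c_a(x-a)^{n}$-type combinations, with $n$ the exponent defining $G$, so that $(x-a)^n=1$ exactly when $x-a\in G$. Every $x\in A$ then satisfies $(x-a)^n=1$ for all $a\neq x$. The exact-once property should let us build a polynomial of degree about $n+k$ that vanishes to order roughly $k$ at each point of $A$; counting roots would then give $k^2\lesssim n+k$ with an error term that forces a contradiction unless $k$ is tiny. I would also check that the derivative conditions remain nondegenerate, which is where the Stepanov-method care is needed. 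The small cases $k=2,3$, i.e.\ $n=2,6$, genuinely occur: take $A=\{0,1\}$ with $G=\{\pm1\}$, and a $3$-element set in $\F_7$ with $G=\F_7^*$ (although $G$ is then not proper, analogous sporadic examples exist). This explains the exclusion of $n\in\{2,6\}$. Any remaining small $k$ I would handle by direct analysis of the Gauss-period constraint $k+\eta_j\ge0$.
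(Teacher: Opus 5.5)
\paragraph{There is a genuine gap.} The paper does not prove this theorem; it quotes it from Kalmynin. Your opening reduction is correct: Theorem~\ref{thm:HP} with $B=-A$ gives $k^2-k\le n$, the trivial count gives $n\le k^2-k$, so a counterexample must have $n=k(k-1)$ with every element of $G$ represented exactly once. But the difficulty starts there. The paper stresses that Kalmynin's argument really yields $|A|^2-|A|\le |G|-1$, and that improving the Hanson--Petridis bound from $|G|$ to $|G|-1$ is the highly nontrivial part. Your proposal never supplies that improvement.

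\paragraph{The Fourier step adds nothing.} Parseval applied to $|\widehat{1_A}|^2=k+\widehat{1_G}$ just reproduces $k^2-k=n$, and $p-1=dn$ is the definition of $d$. The positivity constraints $k+\eta_j\ge 0$ hold automatically in the main case: for $d=2$ one has $\eta_j=(-1\pm\sqrt p)/2$ and $k=(1+\sqrt{2p-1})/2>(1+\sqrt p)/2$. So they cannot rule out the parameter families, nor handle any ``remaining small $k$''.

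\paragraph{The third step is only a wish.} A root count cannot give ``$k^2\lesssim n+k$ with an error term that forces a contradiction'', because $k^2=n+k$ is attained exactly in the equality case. What is needed is the sharp inequality $k^2\le n+k-1$. In the Hanson--Petridis construction, each point of $-A$ is guaranteed to be a root only of order $k-1$, since its negative lies in $A$. So in the equality case the auxiliary polynomial (of degree at most $n$) is forced to equal $C\prod_{a\in A}(x+a)^{k-1}$. Ruling out this forced factorization, or equivalently gaining the missing unit of vanishing, is the whole content of the theorem, and your sketch says nothing about how to do it.

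\paragraph{What a complete argument looks like.} Compare how the paper treats its multiplicative analogues in Sections~\ref{sec:3} and~\ref{sec:sec4}. In the equality case it writes down the complete factorization of the auxiliary polynomial and compares it with the explicit expansion, coefficient by coefficient or via logarithmic derivatives. The ingredients are the vanishing conditions from system~\eqref{system}, Newton's identities, and the unique-representation property. An argument of this kind is what is missing, with the exceptions $n\in\{2,6\}$ emerging from it.

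\paragraph{Minor point.} Your $n=6$ example does not work as stated, since in $\F_7$ the group $G=\F_7^*$ is not proper. A genuine example is $A=\{0,1,4\}\subseteq\F_{13}$ with $G=\mathcal{R}_{13}$.
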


In fact, it is implicit in his proof \cite[Section 3]{Kalmynin} that a slightly stronger statement holds: under the same assumption of \cref{thm:LS}, if $A-A\subseteq G\cup\{0\}$, then
\begin{equation}\label{eq:d-1}
|A|^2 - |A|\le |G|-1. 
\end{equation}
Note that in the same setting, \cref{thm:HP} implies the slightly weaker bound that $|A|^2-|A|\leq |G|$. However, improving the upper bound from $|G|$ to $|G|-1$ requires highly nontrivial efforts. As another consequence, when $p\ge 17$ and $p \equiv 1 \pmod 4$, inequality~\cref{eq:d-1} slightly improves the well-known Hanson--Petridis bound $\frac{\sqrt{2p-1}+1}{2}$ on the clique number of the Paley graph over $\F_p$ \cite{HP} (that is, the largest possible size of $A \subseteq \F_p$ such that $A-A \subseteq \mathcal{R}_p \cup \{0\}$) to $\frac{\sqrt{2p-5}+1}{2}$. We refer to \cite{Y25a} for more discussions on recent progress towards estimating the clique number of Paley graphs and their generalizations.

\smallskip

Next, we turn to multiplicative analogues of the conjectures and results discussed above, before discussing our contributions. These analogues were also initiated by S\'{a}rk\"{o}zy \cite{S14}.

\begin{conjecture}[{S\'{a}rk\"{o}zy \cite{S14}}]\label{conj:M_S}
Let $p$ be a large enough prime. Then for each $\lambda\in \F_p^*$, the set $(\mathcal{R}_p-\lambda)\setminus\{0\}$ has no nontrivial multiplicative decomposition, that is,  there are no two subsets $A,B$ of $\F_p$ with $|A|,|B|\geq 2$, such that \[AB=(\mathcal{R}_p-\lambda)\setminus\{0\}.\]
\end{conjecture}

We note that the removal of $0$ is necessary, since if $0\in \mathcal{R}_p-\lambda$ then trivially we have the  decomposition
\[(\mathcal{R}_p-\lambda)=\{0,1\}\cdot (\mathcal{R}_p-\lambda).\] 
In \cite{KYY}, inspired by \cref{conj:additiveSarkozy2}, we formulated a generalization of \cref{conj:M_S} for proper multiplicative subgroups. 

\begin{conjecture}[{\cite[Conjecture 1.9]{KYY}}]\label{conj:MS2}
Let $d \geq 2$ be fixed. Let $q \equiv 1 \pmod d$ be a sufficiently large prime power. Let $G$ be the multiplicative subgroup of $\F_q$ of index $d$. Then 
for each $\lambda\in\F_q^*$, the set $(G-\lambda)\setminus\{0\}$
has no nontrivial multiplicative decomposition.
\end{conjecture}

Using Stepanov's method, we made some partial progress on \cref{conj:MS2} in our previous paper \cite[Theorem 1.11]{KYY}. The following lemma is a simplified version of \cite[Theorem 1.1]{KYY}, which is the key ingredient in the proof of \cite[Theorem 1.11]{KYY} in the same paper.

\begin{lemma}[{\cite[Theorem 1.1]{KYY}}]\label{thm: stepanovea}
Let $p$ be a prime and let $G$ be a proper multiplicative subgroup of $\F_p$. Let $A,B \subseteq \F_p^*$ and $\lambda \in \F_p^*$. 
If $AB+\lambda \subseteq G \cup \{0\}$, then $$|A||B| \leq |G|+|B \cap (-\lambda A^{-1})|+|A|-1.$$ 
Moreover, when $\lambda \in G$, we have a stronger upper bound:
$$|A||B| \leq |G|+|B \cap (-\lambda A^{-1})|-1.$$
\end{lemma}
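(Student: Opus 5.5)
The plan is to use Stepanov's method, adapting the Hanson--Petridis auxiliary polynomial from \cref{thm:HP} to the multiplicative setting. Put $n=|G|$ and $m=|A|$, so that $n\mid p-1$ and $n\le (p-1)/2$. The only arithmetic input is that $y^{n+1}=y$ for every $y\in G\cup\{0\}$. Since the $a\in A$ are distinct and nonzero, the Vandermonde system
\[
\sum_{a\in A}c_a a^j=\delta_{j,m-1}\qquad(0\le j\le m-1)
\]
has a solution $(c_a)_{a\in A}$, and a nonzero one. With these coefficients I would consider
\[
F(x)=\sum_{a\in A}c_a\,(ax+\lambda)^{n+m-1}-x^{m-1},
\]
which has degree at most $n+m-1$.

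\textbf{Vanishing at good points.} Take $b$ with $ab+\lambda\in G$ for every $a\in A$. I claim that the Hasse derivatives satisfy $F^{(k)}(b)=0$ for $0\le k\le m-1$. The $k$-th Hasse derivative of the sum is $\binom{n+m-1}{k}\sum_a c_a a^k (ab+\lambda)^{n+m-1-k}$. Because $(ab+\lambda)^n=1$, this equals $\binom{n+m-1}{k}\sum_a c_a a^k(ab+\lambda)^{m-1-k}$. Expanding the last factor in powers of $a$, only powers $a^j$ with $j\le m-1$ appear, so the Vandermonde conditions select only the $a^{m-1}$ term. That term is $\binom{n+m-1}{k}\, b^{m-1-k}$. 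Reducing this binomial mod $p$ is the point that needs checking: since $n\equiv 0$ in the relevant sense ($n\mid p-1$, so $n+m-1-k\equiv m-1-k$ in the top digits needed), I expect it to reduce to $\binom{m-1}{k}$. This would exactly match the $k$-th Hasse derivative of $x^{m-1}$ at $b$, giving order of vanishing at least $m$.

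\textbf{Bad points and the count.} If instead $b\in B\cap(-\lambda A^{-1})$, exactly one $a$ has $ab+\lambda=0$. The same computation then still works for $k\le m-2$, since $(ab+\lambda)^{n+m-1-k}$ and its replacement both vanish to high enough order there. So $F$ vanishes to order at least $m-1$ at such $b$. Assuming $F\not\equiv 0$, summing multiplicities gives
\[
m|B|-|B\cap(-\lambda A^{-1})|\le n+m-1,
\]
which is the first bound. For the sharper bound with $\lambda\in G$, observe that $x=0$ is also a good point, because $a\cdot 0+\lambda=\lambda\in G$; moreover $0\notin B$. Adding the extra $m$ roots at $0$ gives $m|B|+m-|B\cap(-\lambda A^{-1})|\le n+m-1$, which is the second bound.

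\textbf{Main obstacle: $F\not\equiv 0$, and the binomial reductions.} The reductions above are valid only when $n+m-1<p$. I would first reduce to this case: if the bound fails trivially we are done, and otherwise we may assume $m\le n+1$, so that $n+m-1\le 2n<p$ when $d\ge 2$. Now suppose $F\equiv 0$. Then for each $i\ne m-1$ with $i\le n+m-1$, the coefficient $\binom{n+m-1}{i}\lambda^{n+m-1-i}\sum_a c_a a^i$ must vanish. The binomials are nonzero since $n+m-1<p$, so $\sum_a c_a a^i=0$ for the $m$ consecutive exponents $i=n,\dots,n+m-1$ (all different from $m-1$ when $n\ge m$). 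A Vandermonde system in the distinct nonzero $a$ then forces all $c_a=0$, contradicting the normalization $\sum_a c_a a^{m-1}=1$. The boundary case $n=m-1$ needs a separate check.

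I expect the genuinely delicate points to be this nonvanishing argument and verifying the binomial congruence $\binom{n+m-1}{k}\equiv\binom{m-1}{k}$ for the Hasse derivatives. If that congruence fails, the fallback is to replace the exponent $n+m-1$ with $m-1+tn$ for a suitable $t$, which keeps the identity $y^{tn}=1$ on $G$.
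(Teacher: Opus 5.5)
There is a genuine gap in the vanishing step, and it comes from your normalization $\sum_a c_a a^j=\delta_{j,m-1}$.

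Your own computation shows that at a good point $b$ the $k$-th Hasse derivative of the sum equals $\binom{n+m-1}{k}b^{m-1-k}$. So you need $\binom{n+m-1}{k}\equiv\binom{m-1}{k}\pmod p$ for $0\le k\le m-1$. This fails already at $k=1$: it would require $n+m-1\equiv m-1$, i.e. $p\mid n$. But $n=|G|$ divides $p-1$, so it is a unit mod $p$; the divisibility $n\mid p-1$ points the wrong way. Concretely, for every good $b\in B\subseteq\F_p^*$,
\[
F'(b)=(n+m-1)b^{m-2}-(m-1)b^{m-2}=n\,b^{m-2}\neq 0 .
\]
So each good point is only a simple root of $F$, and the multiplicity count gives nothing useful. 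The same failure occurs at $x=0$ when $\lambda\in G$, where you would need $\binom{n+m-1}{m-1}\equiv 1$.

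Your fallback does not help. Forcing $\binom{m-1+tn}{1}\equiv m-1$ requires $p\mid t$, so $\deg F\ge m-1+pn$ and the resulting bound is trivial.

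The missing idea is to normalize at the constant moment instead, as the paper does following \cite{KYY}. Take $\sum_a c_a=1$ and $\sum_a c_a a^j=0$ for $1\le j\le |A|-1$, and set
\[
f(x)=-\lambda^{|A|-1}+\sum_a c_a(ax+\lambda)^{N},\qquad N=|A|-1+|G|.
\]
At a good point $b$, using $(ab+\lambda)^{|G|}=1$, the $k$-th Hasse derivative becomes
\[
\binom{N}{k}\sum_{l=0}^{|A|-1-k}\binom{|A|-1-k}{l}b^{l}\lambda^{|A|-1-k-l}\sum_a c_a a^{k+l}.
\]
Every moment with $1\le k+l\le |A|-1$ vanishes. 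Hence for $k\ge 1$ the derivative is $0$ regardless of what $\binom{N}{k}$ is mod $p$. For $k=0$ it equals $\lambda^{|A|-1}$, which the constant term cancels. No binomial congruence is ever needed.

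With this $f$, the rest of your plan goes through and matches the paper's route:
\begin{enumerate}
\item Each point of $B\cap(-\lambda A^{-1})$ is a root of multiplicity at least $|A|-1$, by your argument for the single bad $a$.
\item When $\lambda\in G$, the point $0$ is an extra root of multiplicity at least $|A|$. Indeed $f(0)=\lambda^{|A|-1}(\lambda^{|G|}-1)=0$, and the coefficients of $x,\dots,x^{|A|-1}$ vanish.
\item $f\not\equiv 0$. Your reduction gives $N\le 2|G|<p$, so if every coefficient vanished we would get $\sum_a c_a a^i=0$ for $1\le i\le |A|$. By Vandermonde this forces $c=0$, contradicting $\sum_a c_a=1$.
\end{enumerate}
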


In particular, for $\lambda=1$, we showed that \cref{thm: stepanovea} implies an asymptotic version of \cref{conj:M_S} in \cite[Theorem 1.11]{KYY}. We also showed that it implies weaker versions of \cref{conj:MS2} in \cite[Corollary 6.4 and Theorem 6.6]{KYY}: if $G$ is a proper multiplicative subgroup of a prime field $\F_p$ with $|G|$ sufficiently large, then for all $\lambda \in \F_p^*$, the set $(G-\lambda)\setminus \{0\}$ cannot be written as a product set $AA$ or $ABC$ nontrivially.
 
 We remark that the multiplicative S\'{a}rk\"{o}zy conjecture is potentially more difficult than the additive S\'{a}rk\"{o}zy conjecture \footnote{Private communication with Ilya Shkredov.}. Indeed, in Shkredov's paper \cite{S20}, he made important progress on \cref{conj:additiveSarkozy2} by showing that all small multiplicative subgroups of prime fields have no nontrivial additive decomposition. By contrast, he obtained only a much weaker statement toward multiplicative decompositions of shifted multiplicative subgroups: if $\epsilon>0$ is fixed, then for all \emph{small} multiplicative subgroups $G$ of $\F_p$ satisfying $1\ll_{\epsilon}|G|\ll_{\epsilon} p^{6/7-\epsilon}$, $$A/A\neq (\xi G+1)\setminus \{0\}$$ holds for all $\xi\in \F_p^*$ and subsets $A$ of $\F_p^*$, where $$\xi G=\{\xi g: g\in G\}.$$ Similarly, when $\lambda \notin G$, the conclusion of \cref{thm: stepanovea} is weaker and not strong enough for our intended application to \cref{conj:MS2}.

Our first main result resolves \cref{conj:MS2} for prime fields in the setting $\lambda\in G$.

\begin{theorem}\label{thm:sarkozy1}
Let $p$ be an odd prime, $G$ be a proper subgroup of $\F_p^*$, and $\lambda\in G$. Then there do not exist sets $A,B\subseteq \F_p^*$ with $|A|,|B|\ge 2$ such that
 \[
 AB=(G-\lambda)\setminus\{0\}.
 \]
\end{theorem}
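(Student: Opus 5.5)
We first normalize $\lambda$. Since $\lambda\in G$, replace $A$ by $\lambda^{-1}A$. The relation $AB=(G-\lambda)\setminus\{0\}$ becomes $AB=(G-1)\setminus\{0\}$, because $\lambda^{-1}(G-\lambda)=G-1$. So it suffices to treat $\lambda=1$. Write $n=|G|$ and $S=(G-1)\setminus\{0\}$, so that $|S|=n-1$. We argue by contradiction, assuming $A,B\subseteq\F_p^*$ with $|A|,|B|\ge 2$ and $AB=S$.

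\emph{Step 1: the upper bound from \cref{thm: stepanovea}.} Since $AB+1=G\setminus\{1\}\subseteq G\cup\{0\}$ and $1\in G$, the stronger form of \cref{thm: stepanovea} gives $|A||B|\le n+|B\cap(-A^{-1})|-1$. The intersection is empty: $b=-a^{-1}$ would give $ab=-1$, i.e. $0\in AB+1\subseteq G$, which is impossible. Hence $|A||B|\le n-1$.

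\emph{Step 2: equality and distinct products.} Trivially $|A||B|\ge |AB|=n-1$. So $|A||B|=n-1$, and all products $ab$ are pairwise distinct. Consequently, as multisets, $\{ab\}=S$, and we get the polynomial identity
\[
\prod_{a\in A}\prod_{b\in B}(T-ab)=\frac{(T+1)^n-1}{T}.
\]
Equivalently, for every $k\ge 0$,
\[
\Bigl(\sum_{a\in A}a^k\Bigr)\Bigl(\sum_{b\in B}b^k\Bigr)=\sum_{g\in G\setminus\{1\}}(g-1)^k .
\]
Expanding binomially and using $\sum_{g\in G}g^j=n\cdot 1_{n\mid j}$, the right-hand side equals $(-1)^k n\neq 0$ for $1\le k<n$. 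This rigid information on power sums and on the factorization of $((T+1)^n-1)/T$ into a "multiplicative convolution" of $f_A(T)=\prod_{a\in A}(T-a)$ and $f_B(T)=\prod_{b\in B}(T-b)$ is what we will exploit in the remaining steps.

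\emph{Step 3: ruling out the equality case (main obstacle).} The key difficulty is to show that $|A||B|=|G|-1$ cannot occur when $|A|,|B|\ge 2$, i.e.\ to gain one more unit over \cref{thm: stepanovea}. Following Kalmynin's strategy for \cref{thm:Kalmynin}, we revisit the Stepanov auxiliary polynomial behind \cref{thm: stepanovea} in the equality case. In that case the vanishing conditions become tight, so the auxiliary polynomial is forced to be essentially unique. We then compare it with the explicit polynomial above, aiming either at an extra forced root or at a vanishing coefficient of too high degree. The plan is to show that in the equality case the polynomial
\[
F(T)=\sum_{a\in A} c_a\,(aT+1)^{\,n}\cdots
\]
must vanish to one order higher at the points of $B$. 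That would yield $|A||B|\le n-2$, a contradiction.

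If this refinement proves too delicate, the fallback is an arithmetic route. The power sums $\sum_a a^k$ and $\sum_b b^k$ are nonzero for all $1\le k<n$, and $\prod_{x\in S}x=n$. Combining these facts with Newton's identities applied to $f_A$ and $f_B$ (of degrees $|A|,|B|<n-1$), we would try to force a recursion showing that $A$, up to scaling, is a union of cosets of a nontrivial subgroup $H\le\F_p^*$. Since $\{ab\}$ is the multiset $S$ with distinct products, $S$ would then be $H$-invariant. This is impossible: $S\cup\{0\}=G-1$ has a unique element $0$ outside $\F_p^*$, while the translated structure of $G-1$ is incompatible with invariance under a nontrivial multiplicative subgroup (for example, the sum of the elements of $S$ is $-n$, yet it would have to vanish). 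Either route, if completed, gives the desired contradiction.
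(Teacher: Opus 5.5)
Your Steps 1 and 2 are correct and match the opening of the paper's proof. With $r=0$ the Stepanov bound forces $|A||B|=|G|-1$, all products are distinct, and $G\setminus\{\lambda\}=\bigsqcup_{a\in A}(\lambda+aB)$.

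The gap is Step 3. It carries the entire content of the theorem, yet it is only a statement of intent, and neither route can work as described. Write $n=|A|$ and $m=|B|$ (the paper's notation, not your $n=|G|$), and set $P(x)=\prod_{a\in A}(1+ax)$.

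Route 1 aims to prove something false. Take the Stepanov polynomial $f(x)=-\lambda^{n-1}+\sum_i c_i(a_ix+\lambda)^{|G|+n-1}$. Using $(a_ib+\lambda)^{|G|}=1$ and the partial-fraction identity of Lemma~\ref{lem:GF}, one gets for every $b\in B$
\[
f^{(n)}(b)=\kappa\sum_{i=1}^n\frac{c_ia_i^n}{a_ib+\lambda}=\kappa\cdot\frac{(-1)^{n-1}\prod_{i=1}^n a_i}{\lambda\,P(b/\lambda)}\neq 0,\qquad \kappa=\prod_{j=0}^{n-1}(|G|+n-1-j)\not\equiv 0 \pmod p .
\]
So each $b\in B$ is a root of multiplicity exactly $n$. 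This is forced anyway, since $f=C\bigl(x\prod_{b\in B}(x-b)\bigr)^n$ already uses up the full degree. There is no extra order of vanishing to gain.

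Route 2 supplies no mechanism. By Newton's identities, your relations $p_k(A)p_k(B)=(-1)^k|G|$ for $1\le k\le |G|-1$ are equivalent to $AB=(G-1)\setminus\{0\}$ in the equality case. So Step 2 is a reformulation, not progress. Moreover, the coset structure you hope to derive is already incompatible with $p_1(A)p_1(B)=-|G|\neq 0$, so deriving it is no easier than deriving the contradiction directly.

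What is missing is the paper's second-order argument. Since each $b\in B$ is a root of multiplicity exactly $n$, write $f=(x-b)^nh_b$ and compute $h_b'(b)/h_b(b)$ in two ways.
\begin{itemize}
\item From the factorization, it equals $nH(b)$ with $H(b)=\frac1b+\sum_{b'\neq b}\frac{1}{b-b'}$. The term $\frac1b$ comes from the root of order $n$ at $0$.
\item From $f^{(n+1)}(b)/\bigl((n+1)f^{(n)}(b)\bigr)$, using Lemma~\ref{lem:GF} and its derivative, it equals $\frac{|G|-1}{n+1}\cdot\frac{P'(b/\lambda)}{\lambda P(b/\lambda)}$.
\end{itemize}
Now multiply by $b$ and sum over $B$. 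The first expression gives $n\binom{m+1}{2}$. For the second, use $\frac{bP'(b/\lambda)}{\lambda P(b/\lambda)}=\sum_{a\in A}\bigl(1-\frac{\lambda}{\lambda+ab}\bigr)$, the partition of $G\setminus\{\lambda\}$, and $\sum_{g\in G}g^{-1}=0$; it gives $\frac{(|G|-1)|G|}{n+1}$. With $|G|-1=nm$ this yields $(n-1)(m-1)=0$ in $\F_p$, contradicting $2\le n,m<p$.

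The ingredients your proposal lacks are therefore:
\begin{itemize}
\item using the next Taylor coefficient at the points of $B$, rather than seeking extra vanishing;
\item the closed form of $\sum_i c_ia_i^n/(1+a_ix)$ from Lemma~\ref{lem:GF};
\item a global summation over $B$ that exploits the exact partition of $G\setminus\{\lambda\}$.
\end{itemize}
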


\cref{thm:sarkozy1} can be viewed as a multiplicative analogue of \cref{thm:Kalmynin}: it is somewhat stronger in the sense that \cref{thm:Kalmynin} does not resolve the general subgroup setting; however, it is somewhat weaker in the sense that we need the additional assumption $\lambda\in G$. We believe that the case $\lambda \notin G$ is substantially more difficult, and our techniques do not seem to extend to the case $\lambda\in \F_p^*\setminus G$. We also remark that \cref{thm:sarkozy1} does not always hold when $\lambda\in \F_p^*\setminus G$. Indeed, a short computer search finds the following two counterexamples:
\begin{enumerate}
    \item In $\F_{11}$, we have $(G-\lambda)\setminus \{0\}=AB$ for
\[
G=\{1,3,4,5,9\}, \quad \lambda=2, \quad A=\{1,7\}, \quad B=\{1,2,3\}.
\]
\item In $\F_{19}$, we have $(G-\lambda)\setminus \{0\}=AB$ for
\[
G=\{1,7,8,11,12,18\},\quad \lambda=2,\quad A=\{1,9\}, \quad B=\{6,9,18\}.
\]
\end{enumerate}

We also note that the assumption that $G$ is proper in \cref{thm:sarkozy1} is necessary. Indeed, let $g$ be a primitive root in $\F_p$ and set $A=\{g^j: 0\leq j \leq \frac{p-3}{2}\}$; then \[A/A=\left\{g^j: -\frac{p-3}{2}\leq j\leq \frac{p-3}{2}\right\}=\F_p^*\setminus \{-1\}=(\F_p^*-1)\setminus \{0\}.\]

Next, we consider the simpler setting of expressing a shifted multiplicative subgroup as a ratio set $A/A$. Recall that Shkredov's result confirms a multiplicative analogue of the generalized Lev--Sonn conjecture for small multiplicative subgroups. Our second main result, stated below, completely establishes a multiplicative analogue of \cref{thm:LS}, and in particular extends the above result of Shkredov \cite{S20} to all proper multiplicative subgroups of order at least $3$. 

\begin{theorem}\label{thm: mul version of Lev-Sonn}
Let $p$ be an odd prime. Let $G$ be a proper multiplicative subgroup of $\F_p$ with $|G|\geq 3$, and let $\xi,\mu\in \F_p^*$. Then we have the following.
\begin{enumerate}
    \item The set of nonzero elements of $\xi G+\mu$ cannot be written as a ratio set of the form $A/A$: for any $A \subseteq \F_p^*$, we have
    \[
A/A \neq (\xi G+\mu)\setminus \{0\}. 
\]
\item The set of nonzero elements of $(\xi G \cup \{0\})+\mu$ cannot be written as a ratio set of the form $A/A$: for any $A \subseteq \F_p^*$, we have
\[
A/A \neq \big((\xi G \cup \{0\})+\mu\big)\setminus \{0\}.
\]
\end{enumerate}
\end{theorem}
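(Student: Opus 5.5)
The approach is to avoid Stepanov's method altogether and use only the inversion symmetry of ratio sets. If $S=A/A$ with $A\subseteq \F_p^*$, then $S\subseteq\F_p^*$ and $S^{-1}=S$, since $(a/b)^{-1}=b/a$. So for either part it suffices to show that the relevant set $S$ is \emph{not} closed under $x\mapsto x^{-1}$. Let $n=|G|$. Then $3\le n\le (p-1)/2$, and in particular $p\nmid n$ and $p\nmid \binom{n}{k}$ for $0\le k\le n$. I encode each candidate set as the root set of an explicit polynomial and turn invariance under inversion into a polynomial identity.

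\textbf{Part (1), case $0\notin \xi G+\mu$.} Here $S=\xi G+\mu$ is exactly the set of $n$ distinct roots of $f(x)=(x-\mu)^n-\xi^n$. If $S^{-1}=S$, then the reversed polynomial $x^nf(1/x)=(1-\mu x)^n-\xi^nx^n$ has the same roots, and $f(0)\neq 0$ gives
\[
(1-\mu x)^n-\xi^n x^n=c\big((x-\mu)^n-\xi^n\big)
\]
for some $c\in\F_p^*$. I compare coefficients of $x^k$ for $1\le k\le n-1$: the left side gives $\binom nk(-\mu)^k$ and the right side gives $c\binom nk(-\mu)^{n-k}$. Hence $(-\mu)^{2k-n}=c$ for every such $k$. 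Since $n\ge3$, two consecutive values of $k$ occur, so $\mu^2=1$ and $c=(-\mu)^{-n}$. Substituting into the $x^n$ coefficient $(-\mu)^n-\xi^n=c$, or into the constant term, forces $\xi^n=0$, which is a contradiction.

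\textbf{Part (1), case $0\in\xi G+\mu$, and part (2).} When $0\in\xi G+\mu$, i.e. $(-\mu)^n=\xi^n$, the set $S=(\xi G+\mu)\setminus\{0\}$ is the root set of $f(x)/x$. This polynomial has degree $n-1$, and its coefficients are $\binom nk(-\mu)^{n-k}$ for $k=1,\dots,n$ (after dividing by $x$). Requiring the reversed polynomial to be proportional gives $\binom{n}{k}(-\mu)^{n-k}=c\binom{n}{n+1-k}(-\mu)^{k-1}$. Comparing the resulting ratios for two consecutive $k$ uses $\binom nk/\binom n{k-1}=(n-k+1)/k$, and should again force an impossible relation mod $p$, since $n<p$. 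Part (2) is handled the same way: the set is the root set of $(x-\mu)\big((x-\mu)^n-\xi^n\big)$, of degree $n+1$ with $x-\mu$ a simple factor, possibly after removing the root $0$ (when $\mu=0$ or $(-\mu)^n=\xi^n$). In each subcase I write out the explicit coefficients, impose the palindromic-up-to-scalar condition, and use several consecutive middle coefficients, available because $n\ge3$, to derive a contradiction. The example $\F_p^*\setminus\{-1\}$ after \cref{thm:sarkozy1} is consistent with this: there $n=p-1$ and the binomial coefficients degenerate mod $p$, which is exactly where the argument needs $G$ proper.

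\textbf{Main obstacle.} The expected difficulty is the bookkeeping in the degenerate subcases where $0$ is removed, especially in part (2). There the coefficients are sums of two binomial terms, for example $\binom{n}{k}(-\mu)^{n-k}\cdot(-\mu)+\binom{n}{k-1}(-\mu)^{n-k+1}$, and small $n$ (namely $n=3,4$) may allow accidental solutions. I would first reduce to the normalized case $\mu=1$ by rescaling: $A\mapsto \mu^{-1}A$ is not allowed, but one can compare invariance of $S$ under $x\mapsto1/x$ directly with the normalized coefficients. I would then check the three lowest-order coefficient identities by hand. If some small case genuinely survives the inversion test, the fallback is to add the cardinality constraint $|S|\le |A|^2-|A|+1$ together with \cref{thm: stepanovea}, applied with $B=A^{-1}$ and $\lambda=-\mu/\xi$, to rule it out.
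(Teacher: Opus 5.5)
Your route is genuinely different from the paper's and can be completed, but as written the opening reduction is false. Inversion-invariance of $S$ alone does not rule out every case, and the computations you only sketch fail in exactly three configurations.
\begin{itemize}
\item \textbf{Part (1), $0\in\xi G+\mu$.} Normalize $\xi G=-\mu G$. The relations from three consecutive coefficients give $\mu^2=\frac{n-1}{2n}=\frac{2(n-2)}{3(n-1)}$, i.e.\ $(n-3)(n+1)\equiv 0 \pmod p$. So $n=3$ with $3\mu^2=1$ survives. Then $S=\{\mu(1-\omega),\mu(1-\omega^2)\}$ has product $3\mu^2=1$. For example, $p=13$, $G=\{1,3,9\}$, $\xi=3$, $\mu=10$ gives $S=\{6,11\}=S^{-1}$.
\item \textbf{Part (2), $0\notin\xi G+\mu$.} The middle coefficients $2\le k\le n-1$ force $\mu^2=1$ only when $n\ge 4$. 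For $n=3$ the remaining conditions reduce to $\xi^3=4\mu(1-\mu^2)$ and $(3\mu^2-1)(\mu^2-1)=0$. This leaves $3\mu^2=1$, $\xi^3=8\mu^3$, i.e.\ $S=\mu\{1,3,1+2\omega,1+2\omega^2\}$. For example, $p=13$, $\xi=7$, $\mu=10$ gives $S=\{4,5,8,10\}$, with $4\cdot 10\equiv 5\cdot 8\equiv 1$.
\item \textbf{Part (2), $0\in\xi G+\mu$.} Write $S=\mu T$ with $T=\{1\}\cup\big((1-G)\setminus\{0\}\big)$; the condition becomes $T^{-1}=\mu^2 T$. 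The defining polynomial of $T$ has coefficients $q_0=(-1)^n n$ and $q_i=(-1)^{n-i}\binom{n+1}{i+1}$ for $1\le i\le n$. Matching forces $\mu^2=1/2$ and $n=4$. For $n=3$ the two middle relations are consistent, so you need the outer ones. The case $n=4$ is a real survivor: $p=17$, $G=\{1,4,13,16\}$, $\xi=14$, $\mu=3$ gives $S=\{3,6,8,15\}=S^{-1}$.
\end{itemize}
In every other case your coefficient comparisons do yield a contradiction.

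The missing ingredient is simply that $1\in A/A$, and none of the three survivors contains $1$.
\begin{itemize}
\item In the first, $S=\{s,s^{-1}\}$ with $s\neq s^{-1}$, so $1\notin S$.
\item In the second, $1\in S$ would need $\mu^2\in\{1,1/9\}$ or $-3\mu^2=1$. Each contradicts $3\mu^2=1$ for $p>3$.
\item In the third, $\mu^{-2}=2$ would have to lie in $\{1,4,\pm 2\eta\}$ with $\eta^2=-1$, which is impossible.
\end{itemize}
Your Stepanov fallback also closes these cases, but only after correcting $B=A^{-1}$ to $B=(\xi A)^{-1}$ with $\lambda=-\mu/\xi$; otherwise the hypothesis $AB+\lambda\subseteq G\cup\{0\}$ fails. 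You then also need $r\le |A|$, and $|A|\ge 3$ when $|S|=4$.

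With this added, your argument is the $\F_p$ counterpart of the M\"obius/circle argument the paper uses only over $\C$ in \cref{sec4}. Nonvanishing of $\binom{n}{k}$ and $\binom{n+1}{k}$ modulo $p$ (from $n\le (p-1)/2$) replaces the geometry. Nothing about $A$ is used beyond $S=S^{-1}\ni 1$, so you in fact prove more: no set of either shape is inversion-symmetric and contains $1$.

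The paper instead feeds the trivial bound $|A/A|\le n^2-n+1$ into \cref{thm: stepanovea} to force the extremal case. It then mines the forced factorization of the auxiliary polynomial: $e_1=\cdots=e_{n-1}=0$ in Part (1), and in Part (2) the log-derivative, Newton's identities and the symmetry $A\mapsto \mu/A$, after treating $n=2$ and $\mu=\pm1$ by hand. That heavier machinery buys uniformity with the proof of \cref{thm:sarkozy1}. There the target is a product set $AB$ with no inversion symmetry, so your trick has nothing to act on.
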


We believe that both parts of the theorem are natural multiplicative analogues of \cref{thm:LS}. Also, we remark that the assumptions that $G$ is proper and $|G|\geq 3$ are necessary (in both parts of the above theorem) by considering the following counterexamples:
\begin{itemize}
    \item $|G|=1$: we have $G=\{1\}$ and we can take $A=\{1\}$ so that $A/A=(2G-1)\setminus \{0\}$ and $A/A=((-G\cup \{0\})+1)\setminus \{0\}$.
    \item $|G|=2$: we have $G=\{1,-1\}$ and we can take $A=\{1\}$ so that $A/A=(\frac{1}{2}G+\frac{1}{2})\setminus \{0\}$. By taking $A=\{1,-2\}$, we have $A/A=\{1,-1/2, -2\}=((-\frac{3}{2}G\cup \{0\})-\frac{1}{2})\setminus \{0\}$.
    \item $G=\F_p^*$: the remark following \cref{thm:sarkozy1} shows it is possible to have $A/A=(\F_p^*-1)\setminus \{0\}.$ By taking $A=\F_p^*$, we simply have $A/A=(\F_p+1)\setminus\{0\}$.
\end{itemize}

Our proofs of Theorems~\ref{thm:sarkozy1} and~\ref{thm: mul version of Lev-Sonn} rely on a combination of Stepanov's method and tools from symmetric polynomials. While they are motivated by Kalmynin's paper~\cite{Kalmynin}, the actual arguments are quite different, and the resulting proofs seem to be simpler. We also note that our methods apply more generally to small multiplicative subgroups of arbitrary finite fields. For example, with only minor modifications, the proofs of Theorems~\ref{thm:sarkozy1} and~\ref{thm: mul version of Lev-Sonn} extend to multiplicative subgroups $G$ of finite fields $\F_q$ satisfying $|G|\leq p-2\sqrt{p}$, where $p$ is the characteristic of $\F_q$.

\medskip

Finally, we consider the setting of roots of unity in the field of complex numbers. Let $\C$ be the field of complex numbers and $\C^*=\C\setminus \{0\}$. Let $S^1$ denote the unit circle, that is, $S^1=\{z\in \C: |z|=1\}$. In Kalmynin's paper \cite{Kalmynin}, as well as the paper of Hanson and Petridis \cite{HP}, they also considered analogues of Theorems~\ref{thm:HP} and~\ref{thm:LS} for finite subgroups of $S^1$. In a similar spirit, in the following theorem, we establish an analogue of Theorems~\ref{thm:sarkozy1} and~\ref{thm: mul version of Lev-Sonn} for finite subgroups of $S^1$. Note that in this case, the proof is much easier compared to the finite field setting, as we can apply a geometric argument.

\begin{theorem}\label{thm:complex}
Let $G$ be a finite subgroup of $S^1$.
\begin{enumerate}
\item Let $\lambda\in G$. Then there do not exist sets $A,B\subseteq \C^*$ with $|A|,|B|\ge 2$ such that
 \[
 AB=(G-\lambda)\setminus\{0\}.
 \]
    \item If $|G|\geq 3$, then for every $\xi,\mu\in\C^*$ and every $A\subseteq\C^*$,
\[
A/A\ \neq\ (\xi G+\mu)\setminus\{0\}.
\]
\item If $|G|\geq 3$, then for every $\xi,\mu\in\C^*$ and every $A\subseteq\C^*$,
\[
A/A \neq \big((\xi G \cup \{0\})+\mu\big)\setminus \{0\}.
\]
\end{enumerate} 
\end{theorem}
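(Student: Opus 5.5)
For part (1), after replacing $A,B$ by $\lambda^{-1}A$ and $B$ we may take $\lambda=1$, since $G-\lambda=\lambda(G-1)$. Write $G=\{\zeta^k: 0\le k<n\}$ with $\zeta=e^{2\pi i/n}$. Then the target set is
\[
S=\{s_k: 1\le k\le n-1\},\qquad s_k=\zeta^k-1=2\sin(\pi k/n)\,e^{i(\pi/2+\pi k/n)}.
\]
The arguments $\pi/2+\pi k/n$ are distinct and lie in the open interval $(\pi/2,3\pi/2)$, whose length is less than $\pi$. So an element of $S$ determines its index $k$ through its argument. Note also that any two of these arguments differ by less than $\pi$ in absolute value.

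Suppose $AB=S$ with $|A|,|B|\ge2$. Fix $a_0\in A$ and replace $A$ by $A/a_0$ and $B$ by $a_0B$. Now $1\in A$, so $B\subseteq S$, and also $aB\subseteq S$ for every $a\in A$. Take $b=s_k$ and $b'=s_{k'}$ in $B$ with $d=k'-k\neq0$, and any $a\in A$. Write $ab=s_m$ and $ab'=s_{m'}$.

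\begin{itemize}
\item Comparing arguments of $s_{m'}/s_m=s_{k'}/s_k$, we get $m'-m\equiv k'-k\pmod{2n}$. Both differences lie in $(-n,n)$, so $m'-m=d$.
\item Comparing moduli gives $\varphi(m)=\varphi(k)$, where $\varphi(x)=\sin(\pi(x+d)/n)/\sin(\pi x/n)$.
\item $\log\varphi$ has derivative $\frac{\pi}{n}\bigl(\cot(\pi(x+d)/n)-\cot(\pi x/n)\bigr)$. This never vanishes on the relevant range, because $\cot$ is strictly decreasing on $(0,\pi)$. Hence $\varphi$ is strictly monotone there, so $m=k$.
\end{itemize}

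Thus $ab=b$, i.e.\ $a=1$ for every $a\in A$, which contradicts $|A|\ge2$.

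For parts (2) and (3), let $S$ be the target set and $C$ the circle with center $\mu\neq0$ and radius $|\xi|$; then $S\subseteq C\cup\{\mu\}$. The key observation is that for $a,b\in A$ we have $A/a=(b/a)\cdot(A/b)$, and both $A/a$ and $A/b$ are contained in $S$.

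Suppose first that $|A|\ge3$.
\begin{itemize}
\item In part (2), $S\subseteq C$. The sets $T=A/b$ and $tT$, with $t=b/a$, each contain at least $3$ points of $C$. Since a circle is determined by three points, the similarity $z\mapsto tz$ maps $C$ onto itself. Such a map must fix the center, so $t\mu=\mu$, and $\mu\neq0$ forces $t=1$. Since $a,b$ were arbitrary, $|A|=1$, a contradiction.
\item In part (3), the extra point $\mu$ may lie in $S$. If $|A|\ge4$, then $T\cap C$ and $tT\cap C$ still have at least $3$ points each, but we must check that $t$ carries enough points of $T\cap C$ into $C$ rather than onto $\mu$. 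At most one point of $T$ is sent to $\mu$, so at least $|A|-2$ points of $T\cap C$ land in $C$. This works when $|A|\ge5$, and needs a small separate argument for $|A|\in\{3,4\}$.
\end{itemize}

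It remains to treat small $|A|$. If $|A|=1$, then $S=\{1\}$. If $|A|=2$, then $S=\{1,x,1/x\}$. But $|S|\ge|G|-1\ge2$ in part (2) and $|S|\ge|G|\ge3$ in part (3). So only the cases where $S$ has exactly $3$ points survive: $n=3$; or $n=4$ with $0\in\xi G+\mu$; or, in part (3), $n=3$ with $0\in\xi G+\mu$. In each case one checks by direct computation that $\{1,x,1/x\}$ cannot form the required configuration (three vertices of an equilateral triangle, three vertices of a square whose fourth vertex is $0$, or two vertices plus center). This uses that the configuration forces explicit values of $x$, for which $1/x$ fails the condition.

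I expect these finite small cases, together with the $|A|\in\{3,4\}$ subcase of part (3) involving the center point $\mu$, to be the only fiddly step. The main geometric arguments are clean.
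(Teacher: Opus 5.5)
Your part (1) is correct, and slightly more direct than the paper's proof. Injectivity of $(k,k')\mapsto s_{k'}/s_k$ on pairs with $k\neq k'$ is equivalent to the paper's claim that $x_kx_\ell=x_mx_r$ forces $\{k,\ell\}=\{m,r\}$. Once you normalize $1\in A$, it gives $a=1$ for every $a\in A$ at once, so you skip the paper's preliminary reduction to $|A|=|B|=2$ via the circles $\{z:|az+1|=1\}$.

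In part (2), you use the identity $A/a=(b/a)\cdot(A/b)$ together with the fact that $z\mapsto tz$ maps a circle with center $\mu\neq0$ onto itself only if $t=1$. This is a valid and more elementary substitute for the paper's argument, which uses only $\iota(X)=X$, conjugates $\iota$ by $T(z)=(z-\mu)/\xi$, and classifies M\"obius maps preserving $S^1$ and $G$. It gives $|A|\le 2$, and your three-point computations for $|A|=2$ do go through. You have, however, dropped one surviving case of (2): $|G|=3$ with $0\in\xi G+\mu$. There $|S|=2$, $A=\{u,-u\}$ and $S=\{1,-1\}$, so $\{-1,0,1\}$ would have to be an equilateral triangle. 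This is trivially impossible (the points are collinear, or summing the vertices gives $3\mu=0$ as in the paper). Still, your sentence that only three-point sets $S$ survive is false as written.

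The genuine gap is part (3) for $|A|\in\{3,4\}$, which you flag but never treat. Here your rigidity step produces only two points of $C\cap tC$, so it proves nothing; a further idea is required, not just a computation. One way to supply it is to use what the failure of the count tells you:
\begin{itemize}
\item For $|A|=4$ it forces $\mu\in A/b$ for every $b\in A$, i.e.\ $\mu A=A$.
\item For $|A|=3$ it forces $\mu c\in A$ for at least two $c\in A$.
\end{itemize}

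Suppose $\mu=1$. Then $|x-y|=|\xi||y|=|\xi||x|$ for distinct $x,y\in A$, so $A$ is an equilateral set on a circle about $0$. Hence $|A|=3$ and $A/A=\{1,\omega,\omega^2\}$. This forces $|G|=3$ and $0\in C$, so $|\xi|=|\mu|=1$, contradicting $|\xi|=|\omega-1|=\sqrt{3}$.

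Suppose $\mu\neq1$. Then after scaling, $A$ is one of the following.
\begin{itemize}
\item $\{1,-1,v\}$ or $\{1,-1,v,-v\}$ with $\mu=-1$. The radius is $2$ because $1\in C$, so $|v\pm1|=2$ gives $v=\pm i\sqrt{3}$, and then $|1/v+1|\neq2$.
\item $\{1,\mu,\mu^2\}$. This forces $|\mu|=|1+\mu|=1$, hence $\mu^3=1$ and $|A/A|=3$, so the radius would be both $|\mu|=1$ and $|1-\mu|=\sqrt{3}$.
\item A coset of $\{\pm1,\pm i\}$ with $\mu=\pm i$. Then $|1-\mu|=\sqrt{2}\neq2=|-\mu-\mu|$.
\end{itemize}
Without an argument of this kind, part (3) is not proved.

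This is where the two routes really differ. The paper's inversion argument loses at most two points of $G$ regardless of $A$, so its casework is confined to $|G|\in\{3,4\}$. Your dilation argument needs $|A|-2\ge 3$ usable points and therefore leaves casework in $|A|$.
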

Similar to the remark following \cref{thm: mul version of Lev-Sonn}, the assumption $|G|\geq 3$ is also necessary in (2) and (3). 

\subsection*{Organization of the paper}
In \cref{sec:mHP}, we revisit a multiplicative analogue of the Hanson--Petridis polynomial introduced in our previous paper \cite{KYY}, which plays a crucial role in the proof of \cref{thm: stepanovea}. In \cref{sec:3}, we prove \cref{thm: mul version of Lev-Sonn}, establishing multiplicative analogues of the generalized Lev--Sonn conjecture. In \cref{sec:sec4}, we resolve multiplicative analogues of the generalized S\'{a}rk\"{o}zy conjecture in the case $\lambda\in G$ (\cref{thm:sarkozy1}). 
Finally, in \cref{sec4}, we present a short proof of \cref{thm:complex}.

\section{A multiplicative analogue of Hanson--Petridis polynomials}\label{sec:mHP}

In this section, we revisit a multiplicative analogue of the Hanson--Petridis polynomial developed in our previous paper \cite{KYY}, and derive useful properties from it. 

Let $G$ be a proper multiplicative subgroup of $\F_p$. Let $A,B \subseteq \F_p^*$ and $\lambda \in \F_p^*$ with $|A|, |B| \geq 2$, such that $AB+\lambda \subseteq G \cup \{0\}$. 
Denote $A=\{a_1,a_2,\ldots, a_n\}$ with $|A|=n$ and $B=\{b_1,b_2,\ldots,b_m\}$ with $|B|=m$.
Let $r:=|B\cap(-\lambda A^{-1})|$. Relabel the elements in $B$ so that
$b_1,\dots,b_r\in B\cap(-\lambda A^{-1})$ and $b_{r+1},\dots,b_m\in B\setminus(-\lambda A^{-1})$.

Following \cite[Section 4.1]{KYY}, there exists a unique solution $c_1,c_2,\ldots,c_n \in \F_p$ to the following system of equations: 
\begin{equation} \label{system} 
\left\{
\TABbinary\tabbedCenterstack[l]{
\sum_{i=1}^n c_i=1\\\\
\sum_{i=1}^n c_i a_i^j=0,  \quad 1 \leq j \leq n-1.
} \right.    
\end{equation}

\medskip

The following lemma will be a key observation in the proof of \cref{thm:sarkozy1}.

\begin{lemma}\label{lem:GF}
The coefficients $c_1,c_2,\dots,c_n$ satisfy
\[
\sum_{i=1}^n \frac{c_i a_i^n}{1-a_ix}
= (-1)^{n-1}\Big(\prod_{i=1}^n a_i\Big)\cdot\frac{1}{\prod_{i=1}^n(1-a_ix)}
\]
as an identity of rational functions in $\F_p(x)$.
\end{lemma}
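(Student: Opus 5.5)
The approach is to identify the $c_i$ explicitly and then compare partial fraction decompositions. The system \eqref{system} is a Vandermonde system: the $a_i$ are distinct, so the matrix $(a_i^j)_{0\le j\le n-1,\,1\le i\le n}$ is invertible. The conditions $\sum_i c_i P(a_i)=P(0)$ hold for every polynomial $P$ of degree at most $n-1$, because they hold for $P=1$ and $P=x^j$ with $1\le j\le n-1$. Taking $P$ to be the Lagrange basis polynomial at node $a_i$ gives $c_i = L_i(0)=\prod_{k\ne i}\frac{-a_k}{a_i-a_k}=\prod_{k\ne i}\frac{a_k}{a_k-a_i}$.

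Next, expand the right-hand side of the claimed identity in partial fractions. Since the $a_i$ are distinct and nonzero, the denominator $\prod_i(1-a_ix)$ has simple roots $1/a_i$ and degree $n$, and the numerator is a constant. So
\[
\frac{1}{\prod_{k=1}^n(1-a_kx)}=\sum_{i=1}^n \frac{R_i}{1-a_ix},\qquad R_i=\prod_{k\ne i}\frac{1}{1-a_k/a_i}=\prod_{k\ne i}\frac{a_i}{a_i-a_k}=\frac{a_i^{n-1}}{\prod_{k\ne i}(a_i-a_k)}.
\]
Here $R_i$ is obtained by multiplying through by $1-a_ix$ and evaluating at $x=1/a_i$. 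The decomposition is valid because the left side is a proper rational function vanishing at infinity, so it contains no polynomial part.

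It then suffices to check, for each $i$, that $c_ia_i^n=(-1)^{n-1}\big(\prod_k a_k\big)R_i$. The left side is
\[
c_ia_i^n=a_i^n\frac{\prod_{k\ne i}a_k}{\prod_{k\ne i}(a_k-a_i)}=(-1)^{n-1}\frac{a_i^{n-1}\prod_{k}a_k}{\prod_{k\ne i}(a_i-a_k)},
\]
which is exactly $(-1)^{n-1}\big(\prod_k a_k\big)R_i$.

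The only real obstacle is tracking signs and powers of $a_i$ correctly. Everything else is the Lagrange interpolation description of the unique solution to \eqref{system} together with simple-pole partial fractions.
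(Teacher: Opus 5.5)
Your proof is correct and follows the paper's route: you compute $c_i$ explicitly and then match it term by term against the simple-pole partial fraction expansion of $1/\prod_k(1-a_kx)$. The only differences are that you derive $c_i=L_i(0)$ from Lagrange interpolation rather than citing the inverse Vandermonde formula, which makes the argument self-contained, and that you read off the residues at $x=1/a_i$ directly rather than first rewriting the product as $\prod_k(a_k^{-1}-x)$.
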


\begin{proof}
First, we compute the coefficients $c_1,\dots,c_n$ explicitly in terms of the Vandermonde matrix. To do so, define
$$W = \begin{bmatrix}
  1 & a_1 & a_1^2 & \cdots & a_1^{n - 1} \\
  1 & a_2 & a_2^2 & \cdots & a_2^{n - 1} \\
\vdots & \vdots & \ddots & \vdots \\
  1 & a_n & a_n^2 & \cdots & a_n^{n - 1} \\
\end{bmatrix}.
$$
Then, $W$ is invertible and $W^t [c_1, c_2, \ldots, c_n]^t=[1,0,\ldots, 0]^t$, so \[[c_1,\ldots, c_n]^t=(W^{-1})^t [1,0,\ldots, 0]^t,\] which is precisely the first row of $W^{-1}$. Therefore, using the formula for the inverse of a Vandermonde matrix (see for example \cite[Section 0.9.11]{HJ13}), we obtain
\begin{equation}\label{eq:ciexplicit}
c_i=\frac{(-1)^{n-1}a_1a_2 \cdots a_n}{a_i \cdot \prod_{j\neq i}(a_i-a_j)}.   
\end{equation}

Using equation \cref{eq:ciexplicit}, we have
\begin{align}
\sum_{i=1}^n \frac{c_i a_i^n}{1-a_ix}
&=\sum_{i=1}^n 
\frac{(-1)^{n-1}\prod_{t=1}^n a_t}{a_i\prod_{j\neq i}(a_i-a_j)}\cdot \frac{a_i^n}{1-a_ix} \nonumber\\
&=(-1)^{n-1}\Big(\prod_{t=1}^n a_t\Big)\sum_{i=1}^n
\frac{a_i^{n-1}}{\prod_{j\neq i}(a_i-a_j)}\cdot \frac{1}{1-a_ix}. \label{eq:compare}
\end{align}
Now, write
\[
\frac{1}{\prod_{j=1}^n(1-a_jx)}
=\frac{1}{\prod_{j=1}^n a_j}\cdot \frac{1}{\prod_{j=1}^n(a_j^{-1}-x)}.
\]
By the partial fraction decomposition, we have
\begin{equation}\label{eq:PF}
\frac{1}{\prod_{j=1}^n(a_j^{-1}-x)}
=\sum_{i=1}^n \frac{1}{a_i^{-1}-x}\cdot
\frac{1}{\prod_{j\neq i}(a_j^{-1}-a_i^{-1})}.
\end{equation}
Note that for a fixed $i$, we have
\[
\prod_{j\neq i}(a_j^{-1}-a_i^{-1})
=\prod_{j \ne i}\frac{a_i-a_j}{a_ia_j}
=\frac{\prod_{j\neq i}(a_i-a_j)}{a_i^{n-1}\prod_{j\neq i}a_j},
\]
thus
\[
\frac{1}{\prod_{j\neq i}(a_j^{-1}-a_i^{-1})}
=\frac{a_i^{n-1}\prod_{j\neq i}a_j}{\prod_{j\neq i}(a_i-a_j)}.
\]
Substituting these identities into equation \eqref{eq:PF} yields
\[
\frac{1}{\prod_{j=1}^n(a_j^{-1}-x)}
=\Big(\prod_{j=1}^n a_j\Big)
\sum_{i=1}^n
\frac{a_i^{n-1}}{\prod_{j\neq i}(a_i-a_j)}\cdot \frac{1}{1-a_ix}.
\]
Multiplying both sides by $\big(\prod_{j=1}^n a_j\big)^{-1}$ gives
\[
\frac{1}{\prod_{j=1}^n(1-a_jx)}
=\sum_{i=1}^n
\frac{a_i^{n-1}}{\prod_{j\neq i}(a_i-a_j)}\cdot \frac{1}{1-a_ix}.
\]
Thus, comparing with equation \cref{eq:compare}, the lemma follows.
\end{proof}

Following \cite[Section 4.1]{KYY}, consider the following auxiliary polynomial
\begin{equation}
\label{auxiliary1}
f(x)=-\lambda^{n-1}+\sum_{i=1}^n c_i (a_ix+\lambda)^{n-1+|G|}\in \F_p[x].    
\end{equation}
We showed that $f$ is a non-zero polynomial, each of $b_1,\dots,b_r$ is a root of $f$ with multiplicity at least $n-1$,
and each of $b_{r+1},\dots,b_m$ is a root of $f$ with multiplicity at least $n$.
It follows that
\begin{equation}\label{eq:degf}
r(n-1)+(m-r)n=mn-r \le \deg f \le n-1+|G|.
\end{equation}
Thus, when equality holds in \cref{eq:degf}, $f$ must have degree $n-1+|G|=mn-r$ and 
\begin{equation}\label{eq:notSd}
f(x)=C \cdot \prod_{j=1}^r (x-b_j)^{n-1} \cdot \prod_{j=r+1}^m (x-b_j)^{n},    
\end{equation}
where
$$C=\sum_{i=1}^n c_i a_i^{n-1+|G|} \neq 0.$$

\medskip

Next, additionally assume that $AB+\lambda\subseteq G$ and $\lambda\in G$, which applies to the proof of \cref{thm:sarkozy1} in \cref{sec:sec4}. In this case, we have $r=0$.
Indeed, if $b \in B \cap (-\lambda A^{-1})$, then we have $ab+\lambda=0$ for some $a \in A$, contradicting $0 \notin G$.
Thus, by the above discussion, each of $b_1,\dots,b_m$ is a root of $f$ with multiplicity at least $n$. Also, observe that system~\cref{system} implies that
\[f(0)=-\lambda^{n-1}+\sum_{i=1}^n c_i \lambda^{n-1+|G|}=-\lambda^{n-1}+\lambda^{n-1}=0,\] 
as well as that the coefficient of $x^j$ of $f$ is $0$ for $1\leq j \leq n-1$. Thus, $0$ is also a root of $f$ with multiplicity at least $n$. By comparing the sum of multiplicities of the roots $0, b_1, b_2, \dots, b_m$ of $f$ and the degree of $f$, we have the following stronger inequality:
\begin{equation}\label{eq:degflambda}
(m+1)n\leq |G|+n-1.
\end{equation}
Moreover, when equality holds in \cref{eq:degflambda}, $mn=|G|-1$ and we have the following factorization of $f$:
\begin{equation}\label{eq:f-when-lambda-in-G}
f(x)=C\cdot x^n\prod_{j=1}^m (x-b_j)^n
   =C\Bigl(x\prod_{j=1}^m(x-b_j)\Bigr)^n.
\end{equation}
where
$$C=\sum_{i=1}^n c_i a_i^{n-1+|G|} \neq 0.$$

\section{Proof of \cref{thm: mul version of Lev-Sonn}}\label{sec:3}

In this section, we prove \cref{thm: mul version of Lev-Sonn}.

\subsection{Symmetric polynomials}\label{sec: 2.1}
We first recall some standard families of symmetric polynomials and fix our notation. 
Throughout, $n\ge 1$ is an integer and $x_1,\dots,x_n$ are indeterminates. 

\begin{itemize}
    \item The \textit{power sum symmetric polynomials} are defined by
    \[ p_k(x_1,x_2,\dots,x_n)
        = \sum_{i=1}^n x_i^k\]
    for $k\ge 0$.

    \item The \textit{elementary symmetric polynomials} are defined by
    \[ e_k(x_1,x_2,\dots,x_n)
        = \sum_{1\le i_1< i_2< \cdots <i_k\le n}
          x_{i_1}x_{i_2}\cdots x_{i_k} \]
    for $1\le k \le n$. We set $e_0(x_1,x_2,\dots,x_n)=1$ and $e_k(x_1,x_2,\dots,x_n)=0$ for $k>n$.
\end{itemize}

The following classical identity relates these two families of symmetric polynomials, which will be used in \cref{subsec:3.2}.
\begin{lemma}[Newton's identities \cite{Stanley}]\label{lem:newton}
For each integer $k\ge 1$, we have
\[
ke_{k}(x_{1},x_{2},\dots ,x_{n})=\sum_{i=1}^{k}(-1)^{i-1}e_{k-i}(x_{1},x_{2},\dots ,x_{n})p_{i}(x_{1},x_{2},\dots ,x_{n}),
\]
where the identity holds whenever $n\geq k \geq 1$. 
\end{lemma}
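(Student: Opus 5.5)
The identity is a statement about polynomials with integer coefficients, so I would prove it once in $\mathbb{Z}[x_1,\dots,x_n]$; it then holds over any commutative ring by specialization. The approach is the standard generating-function argument. Set
\[
E(t)=\prod_{i=1}^n(1-x_it)=\sum_{k=0}^n(-1)^k e_k t^k ,
\]
working in the formal power series ring $\mathbb{Z}[x_1,\dots,x_n][[t]]$. The plan is to compute the logarithmic derivative $E'(t)/E(t)$ in two ways and compare coefficients.

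The first key step is to expand the logarithmic derivative term by term. Differentiating the product and expanding each geometric series gives
\[
-t\,\frac{E'(t)}{E(t)}=\sum_{i=1}^n\frac{x_it}{1-x_it}=\sum_{k\ge1}p_k t^k .
\]
Multiplying through by $E(t)$ yields the power series identity
\[
-tE'(t)=E(t)\sum_{i\ge1}p_it^i .
\]

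Next, I would compare the coefficients of $t^k$ on both sides. On the left-hand side, $-tE'(t)=\sum_k (-1)^{k+1}k\,e_k t^k$, so the coefficient of $t^k$ is $(-1)^{k+1}k\,e_k$. On the right-hand side, the coefficient of $t^k$ is
\[
\sum_{i=1}^k(-1)^{k-i}e_{k-i}\,p_i .
\]
Multiplying both sides by $(-1)^{k+1}$ gives
\[
k\,e_k=\sum_{i=1}^k(-1)^{i-1}e_{k-i}\,p_i ,
\]
which is the claimed identity.

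The main point needing care is the range of indices. The derivation above is valid for every $k\ge1$, using the stated conventions $e_0=1$ and $e_k=0$ for $k>n$. In particular it covers the stated range $1\le k\le n$, so there is no real obstacle beyond keeping the signs straight. Finally, because the identity holds in $\mathbb{Z}[x_1,\dots,x_n]$, it specializes to $\F_p$ (or any field). When the identity is later used to recover $e_k$ from the $p_i$, one must divide by $k$, so that application needs $k<p$; the identity itself, however, needs no such hypothesis.
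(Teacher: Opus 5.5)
Your proof is correct. The relation $-tE'(t)=E(t)\sum_{i\ge1}p_it^i$ and the sign bookkeeping in the coefficient comparison check out, and the argument indeed gives the identity for every $k\ge1$ under the conventions $e_0=1$ and $e_k=0$ for $k>n$. The paper gives no proof of its own and simply cites Stanley, where the identity is derived by this same logarithmic-derivative argument; your closing remark that the later application divides by $k$ and so needs $k<p$ is accurate.
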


\subsection{Proof of \cref{thm: mul version of Lev-Sonn}(1)}
Suppose for contradiction that there exist $\xi,\mu\in \F_p^*$, a subset $A\subseteq\F_p^*$, and a proper multiplicative subgroup $G$ of $\F_p$ with order at least $3$, such that
$$A/A = (\xi G+\mu)\setminus\{0\}.$$ 

Let $|A|=n$. Since $|G|\geq 3$, we have $n\geq 2$.
Since $A/A = (\xi G+\mu)\setminus\{0\}$, we have $A/(\xi A)=(G+\mu/\xi)\setminus \{0\}$. Set $\lambda=-\mu/\xi$ so that $$A/(\xi A)+\lambda=G \setminus \{\lambda\},$$ and we follow the same notations as in \cref{sec:mHP} by viewing $B=1/(\xi A)$.

Observe first that $r=|(\xi A)^{-1}\cap (-\lambda A^{-1})|=0$.
Indeed, if we have $u \in (\xi A)^{-1}\cap (-\lambda A^{-1})$, then there exist $a,a' \in A$ such that $u=\xi^{-1}a^{-1}=-\lambda (a')^{-1}$. It then follows that \[a'/(\xi a)=-\lambda \in A/(\xi A),\]
and thus $0=-\lambda+\lambda \in A/(\xi A)+\lambda \subseteq G$, a contradiction.

Since $1/\xi=a/(\xi a)$ for all $a\in A$, as a trivial upper bound on $A/(\xi A)$, we have 
\begin{equation}\label{eq:ub}
|G \setminus \{\lambda\}|=|A/(\xi A)|\le n^2-n+1.    
\end{equation}

Next, we derive an upper bound on $n^2$. If $\lambda \in G$, \cref{thm: stepanovea} implies that 
$$
n^2=|A|^2\leq |G|+r-1=|G|-1,
$$
which violates inequality~\eqref{eq:ub}. Thus, we must have $\lambda \notin G$.
In this case, inequality~\eqref{eq:ub} implies that $|G|=|G\setminus\{\lambda\}|\leq n^2-n+1$. On the other hand, by \cref{thm: stepanovea}, we get 
\[n^2=|A|^2 \le |G|+r+|A|-1=|G|+n-1.\]
Thus, we have $n^2-n+1=|G|$, and equality holds in \cref{eq:degf}. Thus, the auxiliary polynomial $f$ defined in equation~\cref{auxiliary1} also satisfies equation~\cref{eq:notSd}. It follows that we have the following equality of polynomials:
\begin{equation}\label{eq:ff}
f(x)=-\lambda^{n-1}+\sum_{i=1}^n c_i (a_ix+\lambda)^{n^2}=C \cdot \prod_{j=1}^n \bigl(x-\xi^{-1}a_j^{-1}\bigr)^{n}, 
\end{equation}
where $C=\sum_{i=1}^n c_i a_i^{n-1+|G|}\neq 0$.

In the middle expression of equation~\cref{eq:ff}, for each $1\leq k \leq n^2$, the coefficient of $x^{k}$ is
\[\binom{n^2}{k}\lambda^{n^2-k}\sum_{i=1}^n c_i a_i^k.\]
By system \eqref{system}, $\sum_{i=1}^n c_i a_i^k=0$ for $1\le k\le n-1$.
Thus, the coefficient of $x^k$ in $f(x)$ is $0$ for $1\le k\le n-1$. 

As for the right-hand side of equation~\cref{eq:ff}, we set
\[
P(x):=\prod_{j=1}^n (1-\xi a_jx)=\sum_{t=0}^n (-\xi)^t e_t(a_1,\dots,a_n)x^t.
\]
Using $x-\xi^{-1}a_j^{-1}=-\xi^{-1}a_j^{-1}(1-\xi a_jx)$, we have
\[f(x)=C\prod_{j=1}^n(-1)^n(\xi^{-n}a_j^{-n})P(x)^n,\]
and thus the coefficient of $x^k$ in $f(x)$ is
\[
C(-\xi)^{-n^2}\Big(\prod_{j=1}^n a_j\Big)^{-n} \cdot [x^k]P(x)^n,
\]
where $[x^k]P(x)^n$ denotes the coefficient of $x^k$ in $P(x)^n$. Therefore, the coefficient of $x^k$ in $P(x)^n$ is $0$ for $1\le k\le n-1$.

\smallskip

We now prove by induction that $e_k:=e_k(a_1,\ldots,a_n)=0$ holds for all $1\le k\le n-1$.
For $k=1$, the coefficient of $x$ in $P(x)^n$ is $n(-\xi) e_1$, so we have $n(-\xi) e_1=0$.
Thus $e_1=0$.

Assume $e_1=\cdots=e_{k-1}=0$ for some $2\le k\le n-1$.
Then, we have
\[
P(x)=1+(-\xi)^k e_k x^k+x^{k+1}Q_k(x)
\]
for some polynomial $Q_k(x)\in \F_p[x]$. It follows that
\[
P(x)^n = 1 + n(-\xi)^k e_k x^k + x^{k+1}R_k(x).
\]
for some polynomial $R_k(x)\in \F_p[x]$. Since the coefficient of $x^k$ in $P(x)^n$ is $0$, we have $n(-\xi)^k e_k=0$, that is, $e_k=0$.

We have thus proved that $e_1=\cdots=e_{n-1}=0$.
This implies that $$\prod_{i=1}^n(x-a_i)=x^n-e_1x^{n-1}+e_2x^{n-2}-\cdots+(-1)^ne_n=x^n+(-1)^n e_n.$$ 
In particular, $a_1,a_2,\ldots, a_n$ are precisely the $n$ distinct roots of the polynomial $x^n-\alpha$ for $\alpha:=(-1)^{n-1}e_n\ne 0$. It follows that $A$ is a coset of the subgroup of $n$-th roots of unity in $\F_p^*$. Thus, we have $|A/(\xi A)|=n$.
But we also have
$|A/(\xi A)|=|G|=n^2-n+1$. Since $n\ge 2$, we have $n^2-n+1>n$, a contradiction.
This completes the proof. 
\mbox{}\hfill $\square$

\smallskip
\subsection{Proof of \cref{thm: mul version of Lev-Sonn}(2)}\label{subsec:3.2}

Suppose for contradiction that there exist $\xi,\mu\in\F_p^*$ and a set $A\subseteq\F_p^*$ such that
\begin{equation}\label{eq:assumption}
A/A = \bigl((\xi G \cup \{0\})+\mu\bigr)\setminus \{0\}.
\end{equation}
It follows that $|A/A|\ge|G|\geq 3$, and thus we have $n:=|A|\geq 2$. 

\begin{claim}\label{claim:ngeq3}
We have $n\geq 3$.
\end{claim}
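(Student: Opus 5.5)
The plan is to rule out $n=2$ by a direct computation, since $A/A$ is then a very small, very rigid set. Suppose $A=\{a,b\}$ and put $t=b/a\neq 1$. Then $A/A=\{1,t,t^{-1}\}$, which has at most $3$ elements. It is closed under inversion, and the product of its elements is $1$ when $t\neq -1$.

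First I will count the right-hand side of \eqref{eq:assumption}. The set $\bigl((\xi G\cup\{0\})+\mu\bigr)\setminus\{0\}$ contains $\mu\neq 0$ together with the nonzero elements of $\xi G+\mu$.
\begin{itemize}
\item If $-\mu\notin \xi G$, it has exactly $|G|+1\geq 4$ elements, which is impossible.
\item So $-\mu\in\xi G$, and the set has exactly $|G|$ elements.
\end{itemize}
Comparing sizes forces $|G|=3$ and $|A/A|=3$, and the latter means $t\neq -1$. Hence $p\equiv 1\pmod 3$ and $G=\{1,\omega,\omega^2\}$ with $\omega$ a primitive cube root of unity. Writing $-\mu=\xi g_0$ with $g_0\in G$, we get $\xi G=-\mu G$. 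Therefore
\[
\bigl((\xi G\cup\{0\})+\mu\bigr)\setminus\{0\}=\{\mu,\ \mu(1-\omega),\ \mu(1-\omega^2)\}.
\]

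Next I will match this with $\{1,t,t^{-1}\}$. One element must equal $1$, and the other two must be mutually inverse. I split into three cases.
\begin{itemize}
\item If $\mu=1$, the product of the three elements is $(1-\omega)(1-\omega^2)=3$, since $1+\omega+\omega^2=0$. This product must equal $1$, so $3=1$, which is impossible for odd $p$.
\item If $\mu(1-\omega)=1$, then $\mu\cdot\mu(1-\omega^2)=1$ gives $1-\omega^2=(1-\omega)^2=1-2\omega+\omega^2$. This simplifies to $2\omega(\omega-1)=0$, forcing $\omega=1$, a contradiction.
\item The case $\mu(1-\omega^2)=1$ is symmetric, with $\omega$ and $\omega^2$ interchanged.
\end{itemize}
Hence $n\neq 2$, and with $n\geq 2$ already established we get $n\geq 3$.

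The only step needing care is the counting that forces $-\mu\in\xi G$ and $|G|=3$. After that the argument is a short calculation with cube roots of unity, so I expect no real obstacle.
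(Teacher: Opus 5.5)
Your proposal is correct and follows the paper's own argument: assume $n=2$, force $|G|=3$ and $-\mu\in\xi G$, rewrite the right-hand side as $\{\mu,\mu(1-\omega),\mu(1-\omega^2)\}$, and rule out each of the three choices for which element equals $1$ using that the other two must be mutually inverse. The only differences are cosmetic: you spell out the size count that the paper leaves implicit, and you handle the third case through the $\omega\leftrightarrow\omega^2$ symmetry instead of computing it separately.
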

\begin{poc}
Suppose otherwise that $n=2$. In this case, we must have $|A/A|=|G|=3$ and $0\in \xi G+\mu$. Without loss of generality, write $A=\{1,c\}$ with $c\in\F_p^*$. Then $A/A=\{1,c,c^{-1}\}$, and we must have $c\neq \pm 1$ so that $c \neq c^{-1}$. 

Since $-\mu\in \xi G$, we have $\xi G/(-\mu)=G$. Thus, by dividing $-\mu$ on both sides of equation~\cref{eq:assumption}, 
we obtain
\[
(A/A)/(-\mu)=\bigl((G\cup\{0\})-1\bigr)\setminus\{0\}.
\]
Let $G=\{1,\omega,\omega^2\}$ with $1+\omega+\omega^2=0$ and $\omega\neq 1$. Then we have
\[
\{1,c,c^{-1}\}=\{\mu, \mu(1-\omega),\mu(1-\omega^2)\}.
\]
Next, we derive a contradiction in each of the following three cases:
\begin{enumerate}
    \item $\mu=1$. In this case we have $\{1,c,c^{-1}\}=\{1,1-\omega, 1-\omega^2\}$. However, \[(1-\omega)(1-\omega^2)=1-\omega-\omega^2+1=2-\omega-\omega^2=3\neq 1.\] 
    \item $1=\mu(1-\omega)$. In this case we have $\{1,c,c^{-1}\}=\{\mu, 1,\mu(1-\omega^2)\}$. However, \[\mu^2(1-\omega^2)=\mu^2(1-\omega)(1+\omega)=\frac{1+\omega}{1-\omega}\neq 1.\] 
     \item $1=\mu(1-\omega^2)$. In this case we have $\{1,c,c^{-1}\}=\{\mu, \mu(1-\omega),1\}$. However, 
     \[
     (1-\omega)(1+\omega)^2=\omega-\omega^2\neq 1,
     \]
     and thus
     \[\mu^2(1-\omega)=\frac{\mu^2(1-\omega)}{\mu^2(1-\omega^2)^2}=\frac{1}{(1-\omega)(1+\omega)^2}\neq 1.\] 
\end{enumerate}
This shows that $n=2$ is impossible, as required.
\end{poc}

\begin{claim}\label{claim:neq1}
We have $\mu\notin \{1,-1\}$.
\end{claim}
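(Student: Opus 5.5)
We argue directly, using only that $A/A$ is closed under inversion, together with \cref{claim:ngeq3} ($n\ge 3$) and a sum-of-elements count. We treat $\mu=1$ and $\mu=-1$ in parallel. In both cases the plan is the same: first show $A/A\subseteq G$, then conclude that $A/A=G$, and finally compare the sums of the elements on both sides to get a contradiction.

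\emph{Case $\mu=1$.} We have $1\notin \xi G+1$, since $0\notin\xi G$. Hence for every $x\in A/A$ with $x\ne 1$ we get $x-1\in\xi G$.

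\emph{Step 1: $A/A\setminus\{1\}\subseteq -G$.} Take $x\in A/A$ with $x\neq 1$. Then $1/x\in A/A$ and $1/x\ne 1$, so also $1/x-1\in\xi G$. Now
\[
1/x-1=-(x-1)/x .
\]
Dividing this by $x-1\in\xi G$ gives $-1/x\in G$, so $x\in -G$.

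\emph{Step 2: $A/A\subseteq G$.} Since $n\ge 3$, choose distinct $a,b,c\in A$. The ratios $a/b$ and $b/c$ lie in $-G$, so
\[
a/c=(a/b)(b/c)\in G .
\]
But $a/c$ also lies in $-G$, so $-1\in G$. Therefore $-G=G$, and Step 1 gives $A/A\subseteq G$.

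\emph{Step 3: $A/A=G$.} By \cref{eq:assumption}, $A/A$ consists of $1$ together with $(\xi G+1)\setminus\{0\}$. The latter has at least $|G|-1$ elements and does not contain $1$. So $|A/A|\ge |G|$, and with $A/A\subseteq G$ this forces $A/A=G$.

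\emph{Step 4: the count.} Equality in Step 3 forces $|(\xi G+1)\setminus\{0\}|=|G|-1$, so $0\in\xi G+1$. Hence, as sets,
\[
\xi G+1=(G\setminus\{1\})\cup\{0\}.
\]
Sum the elements of both sides in $\F_p$, using $\sum_{g\in G}g=0$ (valid since $|G|>1$):
\begin{itemize}
\item the left side gives $\xi\cdot 0+|G|=|G|$;
\item the right side gives $0-1=-1$.
\end{itemize}
Thus $|G|\equiv -1\pmod p$. Since $1\le |G|\le p-1$, this forces $|G|=p-1$, i.e. $G=\F_p^*$, contradicting that $G$ is proper.

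\emph{Case $\mu=-1$.} Now $A/A=\bigl((\xi G-1)\cup\{-1\}\bigr)\setminus\{0\}$ contains $-1$, and every $x\in A/A$ with $x\ne -1$ satisfies $x+1\in\xi G$.

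\emph{Step 1: $A/A\setminus\{-1\}\subseteq G$.} Take such an $x$. Then $1/x\in A/A$ and $1/x\neq -1$, so $1/x+1=(x+1)/x\in\xi G$. Dividing by $x+1\in\xi G$ gives $1/x\in G$, hence $x\in G$.

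\emph{Step 2: $A/A\subseteq G$.} Write $-1=a/b$ with $a,b\in A$, and pick $c\in A\setminus\{a,b\}$, which exists since $n\ge 3$.
\begin{itemize}
\item If $a/c=-1$, then $c=-a=b$, which is impossible. So $a/c\in G$.
\item Similarly $c/b\ne -1$, so $c/b\in G$.
\end{itemize}
Hence $-1=(a/c)(c/b)\in G$, and $A/A\subseteq G$.

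\emph{Steps 3 and 4.} Counting as before, $|A/A|\ge|G|$, so $A/A=G$. Equality again forces $0\in\xi G-1$, and then
\[
\xi G-1=(G\setminus\{-1\})\cup\{0\}.
\]
Summing elements, the left side gives $-|G|$ and the right side gives $0-(-1)=1$. So $|G|\equiv -1\pmod p$, the same contradiction as before.

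I expect the main obstacle to be Step 2 in each case, i.e. getting from ``all nontrivial ratios lie in $\pm G$'' to $-1\in G$. This is exactly where $n\ge 3$ from \cref{claim:ngeq3} is needed. The remaining steps are routine bookkeeping; the only point needing care is that the size comparison genuinely forces $0$ into the shifted coset, which is what makes the final sum identity exact.
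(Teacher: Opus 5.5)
Your proof is correct. Its first three steps follow the paper's route. For $\mu=1$ you show every ratio other than $1$ lies in $-G$; for $\mu=-1$, every ratio other than $-1$ lies in $G$. You then use $n\ge 3$ to force $-1\in G$, hence $A/A\subseteq G$, and compare sizes to get $A/A=G$.

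The paper does the same after normalizing $1\in A$ and working with elements of $A$ rather than with arbitrary ratios. For $\mu=1$ it obtains $-1\in G$ from $\frac{(a/b)-1}{(b/a)-1}=-\frac{a}{b}\in G$ together with $\frac{a}{b}=\frac{-a}{-b}\in G$. For $\mu=-1$ it first shows $A\setminus\{-1\}\subseteq G$ and then separately rules out $-1\in A\setminus G$. This is the same computation as yours, phrased less uniformly.

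The real difference is the final contradiction. The paper reads off $-1\in\xi G$ (resp.\ $1\in\xi G$) from $A/A=G$ and deduces $\xi G=G$. It then uses the self-similarity $G=\bigl((G\cup\{0\})+1\bigr)\setminus\{0\}$ to walk $-1\mapsto -2\mapsto\cdots\mapsto -(p-1)$ inside $G$, forcing $G=\F_p^*$. The case $\mu=-1$ is only said to be ``similar''.

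Your sum of elements uses $\sum_{g\in G}g=0$ for $|G|>1$. It skips the identification $\xi G=G$ entirely and settles both signs by one explicit computation giving $|G|\equiv -1 \pmod p$. This is shorter and makes the $\mu=-1$ case fully explicit. The paper's walk, by contrast, is a purely combinatorial closure argument that does not need the vanishing power sum.
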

\begin{poc}\

(1) Suppose otherwise that $\mu=1$. Then we have $A/A-1 \subseteq \xi G \cup \{0\}$. 
Without loss of generality, we may assume that $1\in A$. Then for each $a\in A\setminus \{1\}$, we have 
\[
a-1\in \xi G, \qquad \frac{1}{a}-1=\frac{1-a}{a} \in \xi G,
\]
thus $-1/a\in G$, that is, $-a\in G$. 

By \cref{claim:ngeq3}, we have $n\geq 3$. Let $a,b\in A \setminus \{1\}$ with $a\neq b$. Then we have
$$
\frac{a}{b}-1=\frac{a-b}{b}\in \xi G, \qquad \frac{b}{a}-1=\frac{b-a}{a}\in \xi G,
$$
and their ratio lies in $G$ as follows
\[
\frac{(a/b)-1}{(b/a)-1}
=\frac{(a-b)/b}{(b-a)/a}
=-\frac{a}{b}\in G.
\]
On the other hand, since $-a,-b\in G$, we have $\frac{a}{b}=\frac{-a}{-b}\in G$. This, together with $-a/b \in G$, implies $-1\in G$. 

The above discussion shows that $-A \subseteq G$ and thus $A/A\subseteq G$. On the other hand, by equation~\cref{eq:assumption}, we have $|A/A|\geq |G|$. It follows that
$$
G=\bigl((\xi G \cup \{0\})+1\bigr)\setminus \{0\}
$$
and so $-1\in \xi G$. Since $-1\in G$, we have $\xi G=G$ and thus
$$
G=\bigl((G \cup \{0\})+1\bigr)\setminus \{0\}.
$$
This implies that for any $g \in G$ with $g \ne 1$, we also have $g-1\in G$.
Since $-1\in G$, inductively we have $-2,-3,\ldots,-(p-1)\in G$. Thus, $G=\F_p^*$, violating the assumption that $G$ is proper.

(2) Suppose otherwise that $\mu=-1$. We have $A/A+1 \subseteq \xi G \cup \{0\}$. By \cref{claim:ngeq3}, we have $n\geq 3$. Without loss of generality, we may assume that $1\in A$. Then for each $a\in A\setminus \{-1\}$, we have 
\[
a+1\in \xi G, \qquad \frac{1}{a}+1=\frac{a+1}{a}\in \xi G,
\]
and taking the ratio of these two elements of $\xi G$, $a$ lies in $G$. Thus, $A \setminus \{-1\} \subseteq G$. 

Now, we want to show $A\subseteq G$ (and this leads to a contradiction). Suppose otherwise that $-1\in A$ and $-1\notin G$. Since $n\geq 3$, we can pick $a\in A \setminus \{1,-1\}$. Then we have 
$$
\frac{a}{-1}+1=1-a\in \xi G, \qquad \frac{-1}{a}+1=\frac{a-1}{a} \in \xi G.
$$
Since $a\in G$, it follows that $1-a, a-1\in \xi G$ and thus $-1\in G$, a contradiction. 

Thus, we have shown that $A \subseteq G$ and thus $A/A \subseteq G$. Now, a similar argument as in the proof of (1) shows that $G=\F_p^*$, again violating the assumption.
\end{poc}

Divide both sides of equation \cref{eq:assumption} by $\xi$ and set \[\lambda:=-\frac{\mu}{\xi}.\] 
Then, we have
\begin{equation}\label{eq:size0}
A/(\xi A)=\bigl((G\cup\{0\})-\lambda\bigr)\setminus\{0\}.
\end{equation}

Let $r=|(\xi A)^{-1} \cap (-\lambda A^{-1})|$. Then, \[r=|\{(a,b)\in A\times A \colon a/(b\xi)=-\lambda\}|\] and thus $-\lambda$ has exactly $r$ representations in $A/(\xi A)$. Note that if $a,b\in A$ such that $a/(\xi b)=-\lambda$, then $b/(\xi a)=-1/(\lambda \xi^2)$. Thus, $-1/(\lambda \xi^2)$ has exactly $r$ representations in $A/(\xi A)$ as well. Also, $1/\xi$ has exactly $n$ representations in $A/(\xi A)$. Note that $-\lambda=\mu/\xi$ and $-1/(\lambda \xi^2)=1/(\mu \xi)$. We have $\mu\neq \pm 1$ by \cref{claim:neq1}, thus $-\lambda, -1/(\lambda \xi^2), 1/\xi$ are pairwise distinct. 

Suppose first that $\lambda\in G$. It then follows from equation~\cref{eq:size0} that
\begin{equation}\label{eq:ubG}
|G|= |(G \cup \{0\}) \setminus \{\lambda\}|=|A/(\xi A)+\lambda|\leq n^2-(n-1)-2(r-1)=n^2-n-2r+3.
\end{equation}
Now, by \cref{thm: stepanovea} and inequality~\cref{eq:ubG}, we get 
\[n^2=|A|^2 \le |G|+|(\xi A)^{-1}\cap (-\lambda A^{-1})|-1=|G|+r-1\leq n^2-n-r+2,\]
a contradiction since $n \ge 3$.

Therefore, $\lambda \notin G$. Then equation~\eqref{eq:size0} becomes
\begin{equation}\label{eq:size}
A/(\xi A)=(G\cup\{0\})-\lambda.    
\end{equation}
We follow the same notations as in \cref{sec:mHP} by viewing $B=1/(\xi A)$.

Note that $r\geq 1$ since $-\lambda \in A/(\xi A)$. By equation~\cref{eq:size}, we have
\begin{equation}\label{eq:ubG1}
|G|+1=|G \cup \{0\}|=|A/(\xi A)+\lambda|\leq n^2-n+1-2(r-1)=n^2-n-2r+3.
\end{equation}
Moreover, the equality in~\cref{eq:ubG1} holds precisely when each ratio in $A/(\xi A)$ other than $1/\xi,-\lambda,$ and $-(\lambda \xi^2)^{-1}$ has exactly one representation.
On the other hand, by \cref{thm: stepanovea} and inequality~\cref{eq:ubG1}, we get 
\[n^2=|A|^2 \le |G|+|(\xi A)^{-1}\cap (-\lambda  A^{-1})|+|A|-1=|G|+r+n-1 \leq n^2-r+1.\]
Thus, we must have $r=1$ and $n^2-n=|G|$. Moreover, we deduce that each ratio in $A/(\xi A)$ other than $1/\xi$ has exactly one representation.

Since $n^2-n=|G|$, the equality holds in \cref{eq:degf}. Thus, the auxiliary polynomial $f$ defined in equation~\cref{auxiliary1} also satisfies equation~\cref{eq:notSd}. It follows that we have the following equality of polynomials:
\begin{equation}\label{eq:f1111}
f(x)=-\lambda^{n-1}+\sum_{i=1}^n c_i (a_ix+\lambda)^{n^2-1}=C\bigl(x-(\xi a_1)^{-1}\bigr)^{n-1}\prod_{j=2}^n \bigl(x-(\xi a_j)^{-1}\bigr)^n,
\end{equation}
where $C\neq 0$, and by definition, $a_1$ is the unique element in $A$ such that \[(\xi a_1)^{-1} \in (\xi A)^{-1} \cap (-\lambda A^{-1}),\] that is, $\mu a_1\in A$. By replacing the set $A$ with $A/a_1$ and repeating the above argument, we may, without loss of generality, assume that $a_1=1$. Then we must have $\mu \in A$. 

Recall that system \eqref{system} states that $\sum_{i=1}^n c_i=1$ and $\sum_{i=1}^n c_i a_i^k=0$ for $1\le k\le n-1$. Since $|G|=n^2-n$ and $\lambda\notin G$, it follows that 
$$
f(0)=-\lambda^{n-1}+\lambda^{n^2-1}\sum_{i=1}^n c_i=-\lambda^{n-1}+\lambda^{n^2-1}=\lambda^{n-1}(\lambda^{n^2-n}-1)\neq 0.
$$
On the other hand, a similar computation shows that the coefficient of $x^k$ in $f(x)$ is $0$ for $1\le k\le n-1$. Thus, there exists $R(x)\in\F_p[x]$ such that
$f(x)=f(0)+x^nR(x)$.
It follows that the coefficient of $x^k$ in $f'(x)$ is $0$ for $0\le k\le n-2$. On the other hand, differentiating \eqref{eq:f1111} and dividing it by $f$, we obtain
\[
\frac{f'(x)}{f(x)}=\frac{n-1}{x-\xi^{-1}}+\sum_{j=2}^n \frac{n}{x-(\xi a_j)^{-1}}.
\]
Using the identity
\[
\frac{1}{1-z}=1+z+z^2+\cdots+z^{n-2}+\frac{z^{n-1}}{1-z},
\]
we have
\begin{equation}\label{eq:log}
\frac{f'(x)}{f(x)}
=-\sum_{k=0}^{n-2}\Bigl((n-1)\xi^{k+1}+n\sum_{j=2}^n (\xi a_j)^{k+1}\Bigr)x^k
+x^{n-1}\cdot U(x),
\end{equation}
for some rational function $U(x)$ such that $U(x)f(x)$ is a polynomial.

Multiplying both sides of equation~\cref{eq:log} by $f(x)=f(0)+x^nR(x)$ and using the facts that $f'(x)$ is divisible by $x^{n-1}$ and $f(0)\neq 0$,
we obtain that for each $0\le k\le n-2$,
\[
(n-1)\xi^{k+1}+n\sum_{j=2}^n (\xi a_j)^{k+1}=0.
\]
Equivalently, for $1\le k\le n-1$,
\[
\sum_{j=2}^n a_j^{k}=-\frac{n-1}{n}.
\]
It follows that $p_k(a_2,a_3,\ldots, a_n)$ is uniquely determined for $1\leq k\leq n-1$. Newton's identities (\cref{lem:newton}) then allow us to compute $e_k(a_2,a_3,\ldots, a_n)$ for $1\leq k \leq n-1$, and thus the set $\{a_2,a_3,\ldots, a_n\}$ is uniquely determined, say it is given by $A^*$. 

As a summary, we have shown the following claim.
\begin{claim}
If $\{1,\mu\}\subseteq A$ and $A$ satisfies equation~\cref{eq:size}, then $A=\{1\} \cup A^*$, where $A^*$ is the set defined above, and $\mu\in A^*$. Moreover, each ratio in $A/(\xi A)$ other than $1/\xi$ has exactly one representation.
\end{claim}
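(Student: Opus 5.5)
This claim repackages the argument just completed, so the plan is to retrace that argument in order and check that each step used only the two hypotheses: $\{1,\mu\}\subseteq A$ and equation~\cref{eq:size}.

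First, with $n=|A|$, \cref{claim:ngeq3} and \cref{claim:neq1} give $n\ge 3$ and $\mu\neq\pm1$. Hence $1/\xi$, $-\lambda$ and $-1/(\lambda\xi^2)$ are pairwise distinct. Since equation~\cref{eq:size} forces $\lambda\notin G$, we can combine the counting bound \cref{eq:ubG1} with \cref{thm: stepanovea}. This yields $r=1$ and $|G|=n^2-n$, and shows that equality holds in \cref{eq:ubG1}. That equality is exactly the statement that every ratio in $A/(\xi A)$ other than $1/\xi$ has a unique representation, which gives the last assertion of the claim.

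Next, we identify $a_1$. Since $1\in A$ and $\mu\in A$, the pair $(a,b)=(\mu,1)$ satisfies $a/(\xi b)=\mu/\xi=-\lambda$. Because $r=1$, this is the unique representation of $-\lambda$, and so the distinguished element with $(\xi a_1)^{-1}\in(\xi A)^{-1}\cap(-\lambda A^{-1})$ is $a_1=1$. This is the point where the hypothesis $1\in A$ replaces the normalization ``replace $A$ by $A/a_1$'' used in the text. Then equality holds in \cref{eq:degf}, so the factorization \cref{eq:f1111} holds with $(\xi a_1)^{-1}=\xi^{-1}$.

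Then we extract the power sums. The coefficients of $x^k$ in $f$ vanish for $1\le k\le n-1$, and $f(0)=\lambda^{n-1}(\lambda^{n^2-n}-1)\neq 0$ because $\lambda\notin G$ and $|G|=n^2-n$. Comparing these facts with the logarithmic derivative expansion \cref{eq:log} gives
\[
p_k(a_2,\dots,a_n)=-\tfrac{n-1}{n}\qquad (1\le k\le n-1).
\]
Newton's identities (\cref{lem:newton}) apply with $k\le n-1<p$, since $n^2-n=|G|<p$, so the divisions by $k$ are legitimate. They determine $e_1,\dots,e_{n-1}$ of the $n-1$ elements $a_2,\dots,a_n$. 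Hence the monic polynomial $\prod_{j\ge2}(x-a_j)$ is determined, and so is its root multiset $A^*$, which depends only on $n$ and not on $A$. Therefore $A=\{1\}\cup A^*$, and since $\mu\neq1$ and $\mu\in A$, we get $\mu\in A^*$.

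The only delicate step is checking that the identification $a_1=1$ is legitimate. This needs the uniqueness of the representation of $-\lambda$, which is $r=1$, and that in turn needs the distinctness supplied by \cref{claim:neq1}. I expect this to be the main point to verify; everything else is a direct rereading of the computation above.
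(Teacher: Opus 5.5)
Your proposal is correct and follows the paper's own argument step by step: the counting bound \cref{eq:ubG1} together with \cref{thm: stepanovea} gives $r=1$ and $|G|=n^2-n$, then come the factorization \cref{eq:f1111}, the logarithmic-derivative comparison giving the power sums, and Newton's identities. You also make two points explicit that the paper leaves implicit, and both are sound. First, you deduce $a_1=1$ directly from $\{1,\mu\}\subseteq A$ and the uniqueness of the representation $-\lambda=\mu/(\xi\cdot 1)$, rather than normalizing via $A\mapsto A/a_1$. Second, you note that $n-1<p$, so the divisions by $k$ in Newton's identities are legitimate.
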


Now, set $\widetilde{A}=\mu/A$.
Note that we still have
$$
\frac{\widetilde{A}}{\xi \widetilde{A}}=\frac{A}{\xi A} = (G \cup \{0\})-\lambda.
$$
Moreover, since $1,\mu \in A$, we also have $1,\mu\in \widetilde{A}$. 
Thus, we can apply the above claim to $\widetilde{A}$ to conclude that $\widetilde{A}=\{1\} \cup A^*=A$. 

Since $n\geq 3$, we can take $a'\in A \setminus \{1,\mu\}$. Then, we have $\mu/a'\in A$ and $\mu/a'\neq 1$. 
The two ordered pairs
$(1,a')$ and $\Bigl(\frac{\mu}{a'},\mu\Bigr)$ are distinct and both lie in $A\times A$.
They give the same ratio in $A/(\xi A)$ as
\[
\frac{1}{\xi a'}=\frac{\mu/a'}{\xi\mu}.
\]
This shows that the ratio $1/(\xi a')$ (which is not $1/\xi$) has at least two representations in $A/(\xi A)$, a contradiction.
\mbox{}\hfill $\square$

\smallskip
\section{Proof of \cref{thm:sarkozy1}}\label{sec:sec4}

In this section, we prove \cref{thm:sarkozy1}. Throughout, we follow the notations in \cref{sec:mHP}. 

Let $G$ be a multiplicative subgroup of $\F_p^*$ and let $\lambda\in G$. Suppose for contradiction that there exist sets $A,B\subseteq \F_p^*$ with $|A|=n\ge2$, $|B|=m\ge2$ such that
\[
AB=(G-\lambda)\setminus\{0\}.
\]
Then we have $mn\geq |AB|=|G|-1$. Then equality holds in \cref{eq:degflambda}. Thus, $mn=|G|-1$ and
in particular $|AB|=|A||B|$. Thus, $G\setminus \{\lambda\}$ has a disjoint partition as 
\begin{equation}\label{eq:partition}
G \setminus \{\lambda\}=AB+\lambda = \bigsqcup_{a \in A} (\lambda + aB).    
\end{equation}
Moreover, by the discussion in \cref{sec:mHP}, 
we have the following factorization of the auxiliary polynomial $f$ from equation~\eqref{eq:f-when-lambda-in-G}:
\begin{equation}\label{eq:lambda-in-G}
f(x)=C\Bigl(x\prod_{j=1}^m(x-b_j)\Bigr)^n.
\end{equation}
where
$$C=\sum_{i=1}^n c_i a_i^{n-1+|G|} \neq 0.$$

Next, we prove the following claim.

\begin{claim}\label{lem:residue}
For every $b\in B$, we have the identity
\[
\frac{|G|-1}{n+1}\cdot
\frac{\displaystyle\sum_{i=1}^n \frac{c_i a_i^{n+1}}{(a_ib+\lambda)^2}}
{\displaystyle\sum_{i=1}^n \frac{c_i a_i^{n}}{(a_ib+\lambda)}}
=
n\left(\frac{1}{b}+\sum_{\substack{b'\in B\\ b'\neq b}}\frac{1}{b-b'}\right).
\]
\end{claim}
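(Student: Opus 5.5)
The plan is to compute the first two nonvanishing Taylor coefficients of the auxiliary polynomial $f$ at $x=b$ in two ways. One way uses the factorization \eqref{eq:lambda-in-G}. The other uses the defining formula \eqref{auxiliary1} together with the fact that $a_ib+\lambda\in G$. To avoid problems with factorials in characteristic $p$, I will use Hasse derivatives $D^{(k)}$, characterized by $f(x)=\sum_k (D^{(k)}f)(b)\,(x-b)^k$, rather than ordinary derivatives.

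\emph{Step 1 (factorization side).} Fix $b\in B$. By \eqref{eq:lambda-in-G} we can write $f(x)=(x-b)^n h(x)$ with
\[
h(x)=C\,x^n\prod_{b'\in B,\ b'\neq b}(x-b')^n .
\]
Then $(D^{(n)}f)(b)=h(b)$ and $(D^{(n+1)}f)(b)=h'(b)$. We have $h(b)\neq 0$ because $C\neq 0$, $b\neq 0$ (as $B\subseteq\F_p^*$), and the elements of $B$ are distinct. Logarithmic differentiation of $h$ then gives
\[
\frac{(D^{(n+1)}f)(b)}{(D^{(n)}f)(b)}=\frac{h'(b)}{h(b)}=n\Big(\frac1b+\sum_{b'\neq b}\frac{1}{b-b'}\Big).
\]

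\emph{Step 2 (defining-formula side).} Set $N=n-1+|G|$. For $k\ge 1$ we have
\[
D^{(k)}f(x)=\binom{N}{k}\sum_{i=1}^n c_i a_i^k (a_ix+\lambda)^{N-k},
\]
since the constant term $-\lambda^{n-1}$ is killed. Each $a_ib+\lambda$ lies in $AB+\lambda\subseteq G$ by \eqref{eq:partition}, so $(a_ib+\lambda)^{|G|}=1$. Since $N-n=|G|-1$, this gives $(a_ib+\lambda)^{N-n}=(a_ib+\lambda)^{-1}$ and $(a_ib+\lambda)^{N-n-1}=(a_ib+\lambda)^{-2}$. We also have $\binom{N}{n+1}=\binom{N}{n}\frac{N-n}{n+1}=\binom{N}{n}\frac{|G|-1}{n+1}$. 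Here $n+1$ is invertible in $\F_p$, because $mn=|G|-1$ with $m\ge 2$ forces $n+1<|G|<p$.

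\emph{Step 3 (compare).} By Step 1, $(D^{(n)}f)(b)=h(b)\neq 0$. Hence both $\binom{N}{n}$ and $\sum_i c_ia_i^n/(a_ib+\lambda)$ are nonzero in $\F_p$, and we may divide. Dividing the two expressions from Step 2 cancels $\binom{N}{n}$ and yields
\[
\frac{|G|-1}{n+1}\cdot\frac{\sum_i c_ia_i^{n+1}/(a_ib+\lambda)^2}{\sum_i c_ia_i^{n}/(a_ib+\lambda)}.
\]
Equating this with the ratio from Step 1 gives the claim.

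The main obstacle is the characteristic-$p$ issue: the ordinary $n$-th derivative could vanish identically modulo $p$, since $N$ may exceed $p$. Using Hasse derivatives, and getting the nonvanishing of $\binom{N}{n}$ for free from the exact multiplicity $n$ of $b$ as a root of $f$, handles this.
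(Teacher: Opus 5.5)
Your proof is correct and follows essentially the same route as the paper: write $f(x)=(x-b)^n h_b(x)$, identify $h_b'(b)/h_b(b)$ as the ratio of the $(n+1)$-st to the $n$-th Taylor coefficient of $f$ at $b$, and evaluate those coefficients from the defining formula \eqref{auxiliary1} using $(a_ib+\lambda)^{|G|}=1$. The only difference is that you use Hasse derivatives where the paper uses ordinary derivatives. This is harmless but not needed, and the characteristic-$p$ concern you raise does not actually arise: $n\le(|G|-1)/2$ and $|G|\le(p-1)/2$ give $N=|G|+n-1<p$, so every factorial involved is already invertible in $\F_p$.
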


\begin{poc}
Fix $b\in B$. By equation \cref{eq:lambda-in-G}, we can write
$f(x)=(x-b)^n h_b(x)$ with $h_b(b)\neq0$, where  
\[
h_b(x)=Cx^n \prod_{b'\neq b, b'\in B}(x-b')^n.
\]
Note that we have
\begin{equation}\label{eq:hb2}
\frac{h_b'(b)}{h_b(b)} = n\left(\frac{1}{b}+\sum_{\substack{b'\in B\\ b'\neq b}}\frac{1}{b-b'}\right).
\end{equation}

On the other hand, we can rewrite $h_b'(b)/h_b(b)$ in terms of derivatives of $f$ at $b$.
Since $f(x)=(x-b)^n h_b(x)$, we have
\[f^{(n)}(b)=n!h_b(b) \qquad \text{and} \qquad f^{(n+1)}(b)=(n+1)!h_b'(b).\]
It follows that
\begin{equation}\label{eq:der-ratio}
\frac{h_b'(b)}{h_b(b)}=\frac{f^{(n+1)}(b)}{(n+1)f^{(n)}(b)}.
\end{equation}

Finally, we compute $f^{(n)}(b)$ and $f^{(n+1)}(b)$ based on the definition of $f$ in equation~\cref{auxiliary1}. Let $N=|G|+n-1$.
For $1\leq k\leq N$, we have
\[
f^{(k)}(x)=(N)_k\sum_{i=1}^n c_i a_i^{k}(a_ix+\lambda)^{N-k},
\]
where \[(N)_k=N(N-1)\cdots(N-k+1).\]
Since $ab\in (G-\lambda)\setminus \{0\}$ for all $a\in A,b\in B$, we have $ab+\lambda\in G$ and $(a_ib+\lambda)^{|G|}=1$. Thus, for $1\leq i \leq n$, we have
\begin{align*}
(a_ib+\lambda)^{N-n}&=(a_ib+\lambda)^{|G|-1}=(a_ib+\lambda)^{-1},\\
(a_ib+\lambda)^{N-(n+1)}&=(a_ib+\lambda)^{|G|-2}=(a_ib+\lambda)^{-2}.
\end{align*}
Therefore, we have
\[
f^{(n)}(b)=(N)_n\sum_{i=1}^n \frac{c_i a_i^{n}}{a_ib+\lambda}
\quad \text{and} \quad
f^{(n+1)}(b)=(N)_{n+1}\sum_{i=1}^n \frac{c_i a_i^{n+1}}{(a_ib+\lambda)^2}.
\]
Substituting into equation \cref{eq:der-ratio} gives
\[
\frac{h_b'(b)}{h_b(b)}
= \frac{(N)_{n+1}}{(n+1)(N)_n}\cdot
\frac{\displaystyle\sum_{i=1}^n \frac{c_ia_i^{n+1}}{(a_ib+\lambda)^2}}
{\displaystyle\sum_{i=1}^n \frac{c_ia_i^{n}}{(a_ib+\lambda)}}.
\]
Since $N-n=|G|-1$, 
\[
\frac{(N)_{n+1}}{(n+1)(N)_n}=\frac{N-n}{n+1}=\frac{|G|-1}{n+1}.
\]
Combining with equation \cref{eq:hb2} completes the proof. 
\end{poc}

For each $b\in B$, denote
\[
H(b):=\frac{1}{b}+\sum_{\substack{b'\in B\\ b'\neq b}}\frac{1}{b-b'}.
\]
\cref{lem:residue} gives, for each $b\in B$, 
\begin{equation}\label{eq:relationX}
\frac{|G|-1}{n+1}\cdot
\frac{\displaystyle\sum_{i=1}^n \frac{c_ia_i^{n+1}}{(a_ib+\lambda)^2}}
{\displaystyle\sum_{i=1}^n \frac{c_ia_i^{n}}{(a_ib+\lambda)}}
= nH(b).
\end{equation}

Consider the polynomial
\[
P(x)=\prod_{i=1}^n (1+a_i x) \in \F_p[x].
\]
Observe that for each $b\in B$, since $a_ib\neq -\lambda$ for all $i$, we have $P(b/\lambda)\neq 0$.

By \cref{lem:GF}, we have
\begin{equation}\label{eq:P(x)}
\sum_{i=1}^n \frac{c_i a_i^n}{1+a_i x}
= (-1)^{n-1}\Big(\prod_{i=1}^n a_i\Big)\cdot \frac{1}{P(x)}.
\end{equation}
Differentiating equation~\cref{eq:P(x)} with respect to $x$ yields
\begin{equation}\label{eq:P'(x)}
\sum_{i=1}^n \frac{c_i a_i^{n+1}}{(1+a_i x)^2}
= (-1)^{n-1}\Big(\prod_{i=1}^n a_i\Big)\cdot \frac{P'(x)}{P(x)^2}.
\end{equation}
For each $b\in B$, by setting $x=b/\lambda$ in equations~\cref{eq:P(x)} and~\cref{eq:P'(x)} and taking their ratio, we have
\[
\frac{\displaystyle\sum_{i=1}^n  \frac{c_ia_i^{n+1}}{(a_ib+\lambda)^2}}
{\displaystyle\sum_{i=1}^n  \frac{c_ia_i^{n}}{(a_ib+\lambda)}}
= \frac{1}{\lambda}\cdot \frac{P'(b/\lambda)}{P(b/\lambda)}.
\]
Thus, substituting into equation \cref{eq:relationX}, for every $b\in B$, 
\[
H(b)=\frac{|G|-1}{n(n+1)}\cdot \frac{P'(b/\lambda)}{\lambda P(b/\lambda)}.
\]
Multiplying the above equation by $b$ and summing over $b\in B$, we obtain
\begin{equation}\label{eq:bHb1}
\sum_{b\in B} bH(b)
= \frac{|G|-1}{n(n+1)}\sum_{b\in B} \frac{bP'(b/\lambda)}{\lambda P(b/\lambda)}.
\end{equation}
On the other hand, observe that
\[
|B|(|B|-1)=\sum_{\substack{b,b'\in B\\ b\neq b'}}1=\sum_{\substack{b,b'\in B\\ b\neq b'}}\left(\frac{b}{b-b'}+\frac{b'}{b'-b}\right)=2\sum_{b\in B}\ \sum_{\substack{b'\in B\\ b'\neq b}}\frac{b}{b-b'}.
\]
It follows from the definition of $H(b)$ that
\begin{equation}\label{eq:bHb2}
\sum_{b\in B} bH(b)=\binom{m+1}{2}.
\end{equation}
Thus, combining equations~\eqref{eq:bHb1} and~\eqref{eq:bHb2}, we have
\begin{equation}\label{eq:sum-log}
\sum_{b\in B} \frac{bP'(b/\lambda)}{\lambda P(b/\lambda)}
= \frac{n(n+1)}{|G|-1}\binom{m+1}{2}.
\end{equation}

Since \[\frac{P'(x)}{P(x)}=\sum_{i=1}^n \frac{a_i}{1+a_i x},\] 
for each $b\in B$, we have
\[
\frac{bP'(b/\lambda)}{\lambda P(b/\lambda)}=\sum_{i=1}^n \frac{a_i b}{\lambda+a_i b}
=\sum_{i=1}^n\left(1-\frac{\lambda }{\lambda +a_i b}\right)=n-\sum_{i=1}^n \frac{\lambda }{\lambda +a_i b}.
\]
Summing over $b\in B$ gives
\begin{equation}\label{eq:expand-sum}
\sum_{b\in B} \frac{bP'(b/\lambda )}{\lambda P(b/\lambda)}
=nm-\sum_{i=1}^n\ \sum_{b\in B}\frac{\lambda}{a_i b + \lambda}.
\end{equation}

Since the sets $\lambda+a_iB$ for $i=1,\dots,n$
form a disjoint partition of $G\setminus\{\lambda\}$ by equation~\eqref{eq:partition}, 
\[
\sum_{i=1}^n\sum_{b\in B}\frac{\lambda}{a_ib+\lambda}
=\lambda\sum_{g\in G\setminus\{\lambda\}}\frac{1}{g}.
\]
Also, since $|G|=mn+1\ge 5$, we have $\sum_{g\in G}g^{-1}=\sum_{g\in G}g=0$, and thus
\[
\sum_{g\in G\setminus\{\lambda\}}\frac{1}{g}=-\frac{1}{\lambda}.
\]
Therefore, the double sum equals $-1$, and equation \cref{eq:expand-sum} becomes
\begin{equation}\label{eq:sum-log=G}
\sum_{b\in B} \frac{bP'(b/\lambda)}{\lambda P(b/\lambda)}=nm+1=|G|.
\end{equation}
Plugging equation \cref{eq:sum-log=G} into equation \cref{eq:sum-log} gives
\[
|G|=\frac{n(n+1)}{|G|-1}\binom{m+1}{2}.
\]
Using $|G|-1=nm$, we obtain
\[
nm+1=\frac{(n+1)(m+1)}{2},
\]
so we have $(n-1)(m-1)=0$.
This contradicts the assumption $n,m\ge 2$ and completes the proof.

\section{The complex setting}\label{sec4}
In this section, we prove \cref{thm:complex}. 

We begin with a quick proof of \cref{thm:complex}(1).
\begin{proof}[Proof of \cref{thm:complex}(1)]
Suppose otherwise that there exist $A,B\subseteq \C^*$ with $|A|,|B|\ge 2$ such that
 $ AB=(G-\lambda)\setminus\{0\}.$ We may assume $\lambda=1$ without loss of generality by replacing $B$ with $B/\lambda$.  Let $|G|=n$. 

Fix $a\in A$. Since $ab+1\in G$ for every $b\in B$, we have $|ab+1|=1$, so $B$ is contained in the circle
\(
C_a:=\{z\in\C:\ |az+1|=1\}.
\)
If $a_1\neq a_2$ are two elements of $A$, then $C_{a_1}\neq C_{a_2}$, and hence \(B\subset C_{a_1}\cap C_{a_2}.\)
Since two distinct circles meet in at most two points, it follows that $|B|\le 2$. By symmetry, also $|A|\le 2$. Thus, we must have $|A|=|B|=2.$  Write $A=\{a_1,a_2\}$ and $B=\{b_1,b_2\}$.  

Let \(\zeta=e^{2\pi i/n}\), and write \(x_k:=\zeta^k-1\) for \(1\le k\le n-1\). Next, we prove the following claim. 

\begin{claim}\label{claim:x_k}
If $x_kx_\ell=x_mx_r$ for some \(1\le k,\ell,m,r\le n-1\), then $\{k,\ell\}=\{m,r\}.$
\end{claim}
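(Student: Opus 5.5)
The plan is to write each $x_k$ in polar form and compare moduli and arguments separately. For $1\le k\le n-1$ we have
\[
x_k=\zeta^k-1=e^{\pi i k/n}\bigl(e^{\pi i k/n}-e^{-\pi i k/n}\bigr)=2i\sin(\pi k/n)\,e^{\pi i k/n},
\]
and $\sin(\pi k/n)>0$ because $0<\pi k/n<\pi$. Hence
\[
x_kx_\ell=-4\sin(\pi k/n)\sin(\pi \ell/n)\,e^{\pi i (k+\ell)/n},
\]
where the real factor $4\sin(\pi k/n)\sin(\pi \ell/n)$ is strictly positive. The same formula holds for $x_mx_r$.

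First, I compare arguments. Since both products are $-1$ times a positive real times a unit complex number, the equality $x_kx_\ell=x_mx_r$ gives
\[
\sin(\pi k/n)\sin(\pi\ell/n)=\sin(\pi m/n)\sin(\pi r/n)
\quad\text{and}\quad
e^{\pi i(k+\ell)/n}=e^{\pi i(m+r)/n}.
\]
The second equation gives $k+\ell\equiv m+r \pmod{2n}$. Both sums lie in $[2,2n-2]$, so in fact $k+\ell=m+r$.

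Next, I use the product-to-sum formula
\[
\sin\alpha\sin\beta=\tfrac12\bigl(\cos(\alpha-\beta)-\cos(\alpha+\beta)\bigr).
\]
Since $\alpha+\beta$ is the same on both sides ($k+\ell=m+r$), equality of the moduli reduces to
\[
\cos\bigl(\pi(k-\ell)/n\bigr)=\cos\bigl(\pi(m-r)/n\bigr).
\]
Cosine is even, and $|k-\ell|,|m-r|\le n-2$, so both angles $\pi|k-\ell|/n$ and $\pi|m-r|/n$ lie in $[0,\pi)$. Cosine is injective on $[0,\pi]$, so $|k-\ell|=|m-r|$.

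Combining $k+\ell=m+r$ with $|k-\ell|=|m-r|$ gives $\{k,\ell\}=\{m,r\}$, since an unordered pair is determined by its sum and the absolute value of its difference.

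There is no serious obstacle here. The only points needing care are that the sines are strictly positive, so that the sign and argument bookkeeping is legitimate, and the range checks that turn the congruence modulo $2n$ into an equality and make cosine injective.
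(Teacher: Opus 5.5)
Your proof is correct and follows the paper's own argument. Both write $x_t=2i\,e^{\pi i t/n}\sin(\pi t/n)$, read off $k+\ell=m+r$ from the arguments, reduce equality of the moduli via the product-to-sum formula to $\cos(\pi(k-\ell)/n)=\cos(\pi(m-r)/n)$, and combine the two. The only difference is that you make explicit the steps the paper leaves implicit: positivity of the sines, passing from the congruence modulo $2n$ to an equality, and injectivity of cosine on $[0,\pi]$.
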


\begin{poc}
For \(1\le t\le n-1\), we have
\[
x_t=\zeta^t-1
=e^{2\pi i t/n}-1
=2i\,e^{\pi i t/n}\sin\frac{\pi t}{n}.
\]
Hence
\[
x_kx_\ell
=-4\,e^{\pi i (k+\ell)/n}\sin\frac{\pi k}{n}\sin\frac{\pi \ell}{n}.
\]
So \(x_kx_\ell=x_mx_r\) implies
\[e^{\pi i (k+\ell-m-r)/n}=1 \qquad \text{and} \qquad \sin\frac{\pi k}{n}\sin\frac{\pi \ell}{n}=
\sin\frac{\pi m}{n}\sin\frac{\pi r}{n}.
\]
In particular, the first equation forces $k+\ell=m+r.$ Then the second equation above reduces to 
\[
\cos\frac{\pi(k-\ell)}{n}= \cos\frac{\pi(m-r)}{n},
\]
that is, $k-\ell=\pm(m-r).$ Together with \(k+\ell=m+r\), this gives
$\{k,\ell\}=\{m,r\}.$
\end{poc}

For $i,j\in \{1,2\}$, let \(k_{ij}\in\{1,\dots,n-1\}\) such that $a_ib_j=x_{k_{ij}}$. 
Then
\[
x_{k_{11}}x_{k_{22}}
=(a_1b_1)(a_2b_2)
=(a_1b_2)(a_2b_1)
=x_{k_{12}}x_{k_{21}}.
\]
\cref{claim:x_k} then implies that $
\{k_{11},k_{22}\}=\{k_{12},k_{21}\}.$
Thus either \(k_{11}=k_{12}\) or \(k_{11}=k_{21}\). Clearly, both cases are impossible.
\end{proof}

The proof of \cref{thm:complex} (2)(3) is slightly more subtle. To achieve that, we first review some basic facts on M\"obius transformations. Let $\Chat=\C\cup\{\infty\}$ be the Riemann sphere.
A \emph{M\"obius transformation} is a map
\[
M:\Chat\to\Chat,\qquad M(z)=\frac{az+b}{cz+d},
\]
where $a,b,c,d\in\C$ and $ad-bc\neq 0$.
We interpret the value at $\infty$ by
\[
M(\infty)=
\begin{cases}
a/c,& c\neq 0,\\
\infty,& c=0,
\end{cases}
\qquad\text{and}\qquad
M\!\left(-\frac{d}{c}\right)=\infty\ \ (c\neq 0).
\]
Every M\"obius transformation is bijective on $\Chat$, and its inverse map is also a M\"obius transformation. 
In particular, the inversion map $$\iota(z):=1/z$$ is a M\"obius transformation.
We will use the following standard facts (see, for example,
\cite[Ch.~3]{A78}):
\begin{itemize}
    \item M\"obius transformations send generalized circles (that is, circles or lines in $\Chat$) to generalized circles.
    \item M\"obius transformations are uniquely determined by their values on three distinct points.
    \item Two distinct generalized circles meet in at most two points.
\end{itemize}

The following lemma will be useful in the proof of \cref{thm:complex}.

\begin{lemma}
\label{lem:dihedral}
Let $m\ge 3$ and let $G=\{\omega^k:0\le k<m\}\subseteq S^1$ be the group of $m$-th roots of unity, where $\omega=e^{2\pi i/m}$.
If a M\"obius transformation $\psi$ satisfies $\psi(S^1)=S^1$ and $\psi(G)=G$, then there exists $\zeta\in G$ such that 
$\psi(z)=\zeta z$ for all $z\in \Chat$ or $\psi(z)=\zeta/z$ for all $z\in \Chat$.
\end{lemma}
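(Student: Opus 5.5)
The plan is to classify Möbius transformations preserving $S^1$ and then use the finite set $G$ to pin down the map. A Möbius transformation mapping $S^1$ onto itself either preserves the two complementary disks or swaps them. Composing with $\iota$ if necessary, which preserves both $S^1$ and $G$ because $G^{-1}=G$, we may assume $\psi$ maps the unit disk $D$ onto itself. The standard classification then gives
\[
\psi(z)=e^{i\theta}\frac{z-\alpha}{1-\bar\alpha z}
\]
for some $|\alpha|<1$. It therefore suffices to show that $\alpha=0$ and $e^{i\theta}\in G$. Once this is established, undoing the possible composition with $\iota$ yields one of the forms $\psi(z)=\zeta z$ or $\psi(z)=\zeta/z$.

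\textbf{Cyclic order.} Since $\psi$ restricted to $S^1$ is an orientation-preserving homeomorphism, it preserves the cyclic order of points on the circle. It maps the $m$ points of $G$, arranged in cyclic order $1,\omega,\dots,\omega^{m-1}$, bijectively to themselves. Hence there is some $s$ with $\psi(\omega^k)=\omega^{k+s}$ for all $k$. Put $\zeta=\omega^s$ and consider $\phi(z)=\zeta^{-1}\psi(z)$. This is again a Möbius transformation, and it fixes every point of $G$. Because $m\ge 3$, $\phi$ fixes at least three distinct points. A Möbius transformation is determined by its values at three points, so $\phi$ is the identity, and therefore $\psi(z)=\zeta z$.

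\textbf{Orientation-reversing case.} If instead $\psi$ swaps the two disks, then $\iota\circ\psi$ preserves $D$ and still maps $G$ to $G$. The previous case gives $\iota\circ\psi(z)=\zeta' z$, so $\psi(z)=1/(\zeta' z)=\zeta'^{-1}/z$ with $\zeta'^{-1}\in G$, as required.

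\textbf{Main obstacle.} The delicate point is justifying that $\psi|_{S^1}$ preserves cyclic order, i.e.\ that a disk-preserving Möbius map restricts to an orientation-preserving homeomorphism of $S^1$. I would argue this directly: $\psi(S^1)=S^1$ implies that $\psi$ maps $D$ onto either $D$ or the exterior, by connectedness and bijectivity on $\Chat$. In the disk-preserving case, $\psi$ is conformal and keeps $D$ on its left, so the boundary orientation is preserved. Alternatively, one can use the explicit form above: $\arg\psi(e^{it})$ is strictly increasing in $t$. Any strictly monotone circle map that is a bijection of the cyclically ordered set $G$ must be a rotation of indices, which is exactly what the cyclic-order step uses.
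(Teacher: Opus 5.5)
Your argument is correct and is essentially the paper's: $\psi|_{S^1}$ either preserves or reverses the cyclic order of the regular $m$-gon $G$, so on $G$ it agrees with a rotation $z\mapsto \zeta z$ or a reflection $z\mapsto \zeta/z$. Since $m\ge 3$, the fact that M\"obius maps are determined by three points then extends this to all of $\Chat$. The only difference is that you justify the preserve/reverse dichotomy more carefully, via the two complementary disks and the explicit disk-automorphism form, and you reduce the reversing case to the preserving one by composing with $\iota$; the paper asserts the dichotomy in one line.
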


\begin{proof}
Note that $\psi(S^1)=S^1$ and $\psi$ is continuous on $S^1$. Since $\psi$ permutes the elements in the set $G$, it has to preserve or reverse the cyclic order on the $m$ vertices of the regular $m$-gon $G$. Thus, $\psi$ restricted to $G$
is either a rotation $z\mapsto \zeta z$ or a reflection $z\mapsto \zeta/z$ for some $\zeta\in G$.
Since M\"obius maps are determined by their values on three distinct points, $\psi$ must agree with that rotation/reflection on all of $\Chat$.
\end{proof}

We end the paper with a proof of \cref{thm:complex} (2)(3).

\begin{proof}[Proof of \cref{thm:complex}(2)(3)]
Write $m:=|G|\ge 3$. Let $\xi,\mu\in\C^*$, and set
\[
V:=\mu+\xi G,\qquad C:=\mu+\xi S^1,
\]
where $C$ is the Euclidean circle of center $\mu$ and radius $|\xi|$. Let $T(z):=(z-\mu)/\xi$ for all $z\in \Chat$, so that $T(C)=S^1$ and $T(V)=G$. Define \[\psi:=T\circ\inv\circ T^{-1}.\] 

\medskip
(2) Suppose for contradiction that there exists  $A\subseteq\C^*$ such that
\[
A/A=X:=(\mu+\xi G)\setminus\{0\}.
\]
Since $A/A$ is closed under the inversion map $\inv: x \mapsto x^{-1}$, we have $\inv(X)=X$ and $1\in X$. Next, we consider the following two cases.

\medskip\noindent
\textbf{Case 1: $0\notin V$}. 
Then $X=V$ and thus $\inv(V)=V$.
Since $V$ contains $m\ge 3$ points of the circle $C$, the generalized circles $C$ and $\inv(C)$ meet in at least three points. It follows that $\inv(C)=C$ since two distinct generalized circles meet in at most two points, and thus $\psi(S^1)=S^1$. Moreover, for each $g\in G$, we have $v:=T^{-1}(g)=\mu+\xi g\in V$, hence $\inv(v)\in V$, so
\[
\psi(g)=T(\inv(v))\in T(V)=G.
\]
This, together with the injectivity of $\psi$, shows that $\psi(G)=G$. By Lemma~\ref{lem:dihedral}, we have  
$\psi(\infty)\in\{\infty,0\}$. However, a direct computation gives
\begin{equation}\label{eq:infty}
\psi(\infty)=T\bigl(\inv(T^{-1}(\infty))\bigr)=T(\inv(\infty))=T(0)=-\mu/\xi\in \C^*,
\end{equation}
a contradiction.

\medskip\noindent
\textbf{Case 2: $0\in V$}.
Then $X=V\setminus\{0\}$ has size $m-1$.

We first claim $m\geq 4$. Assume otherwise $m=3$. Then $|X|=2$. Since $\inv(X)=X$, it follows that $X=\{-1,1\}$. Write $G=\{1,\omega,\omega^2\}$ with $\omega=e^{2\pi i/3}$. Then we must have
\[
\{-1,0, 1\}=\{\mu+\xi, \mu+\xi \omega, \mu+\xi \omega^2\}.
\]
It follows that $0=(\mu+\xi)+(\mu+\xi \omega)+(\mu+\xi \omega^2)=3\mu$, that is, $\mu=0$, a contradiction. 

Let $g_0\in G$ be the unique element with $\mu+\xi g_0=0$. Since $0\in V \subseteq C$, the image $\inv(C)$ is a line. It follows that $\psi(S^1)$ is a line. For each $g\in G\setminus\{g_0\}$, we have $\mu+\xi g\in X$, so $\inv(\mu+\xi g)\in X$ and
\[
\psi(g)=T\bigl(\inv(\mu+\xi g)\bigr)\in T(X)=G\setminus\{g_0\}\subseteq S^1.
\]
Thus, since $\psi$ is injective, $\psi(G\setminus\{g_0\})$ is a set of $m-1\ge 3$ distinct points contained in the intersection of the line $\psi(S^1)$ with $S^1$,
contradicting the fact that a line meets a circle in at most two points. 

\medskip

(3) Suppose for contradiction that there exists  $A\subseteq\C^*$ such that
\[
A/A=X:=\big((\xi G \cup \{0\})+\mu\big)\setminus \{0\}.
\]
Since $A/A$ is closed under inversion, we have $\inv(X)=X$ and $1\in X$.

We first claim that $m \geq 4$. Suppose otherwise that $m=3$. Without loss of generality, assume that $1\in A$. Note that we can find an element $r\in A\setminus \{1,-1\}$ for otherwise $A/A \subseteq \{1,-1\}$. It follows that $A/A=\{1,r,r^{-1}\}$. We can then follow the proof of \cref{claim:ngeq3} verbatim to get a contradiction.
 
Next, we consider the following two cases.

\medskip\noindent
\textbf{Case 1: $0\notin V$}.
Then $X=V \cup \{\mu\}$. For each $v\in V$ we have $\inv(v)\in X$. Note that at most one $v\in V$ satisfies $\inv(v)=\mu$ (namely $v=1/\mu$), so for at least $m-1\ge 3$ distinct vertices $v\in V$,
we have $\inv(v)\in V\subseteq C$.
Hence, the generalized circles $C$ and $\inv(C)$ meet in at least three points. It follows that $\inv(C)=C$ and thus $\psi(S^1)=S^1$.
Moreover, since $0\notin V$, we have $T(X)=G\cup\{0\}$. For each $g\in G$,
$\mu+\xi g\in X$ implies $\psi(g)=T\bigl(\inv(\mu+\xi g)\bigr)\in T(X)=G\cup \{ 0\}$, and because $\psi(g)\in S^1$ we get $\psi(g)\in G$.
Thus $\psi(G)=G$.
By Lemma~\ref{lem:dihedral}, $\psi(\infty)\in\{\infty,0\}$.
However, as in equation~\eqref{eq:infty}, we have $\psi(\infty)=-\mu/\xi \in \C^*$, a contradiction. 

\medskip\noindent
\textbf{Case 2: $0\in V$}. Let $g_0\in G$ be the unique element with $\mu+\xi g_0=0$. 

First, we claim $m \geq 5$. Assume otherwise that $G=\{1,-1,i,-i\}$. Note that for $g\in G$, we have $\mu+\xi g=\mu-\mu g_0^{-1}g=\mu(1-g_0^{-1}g)$. Thus, $\mu+\xi G=\mu(1-G)$. It follows that
\[
X=\mu\{1,2,1-i,1+i\}, \qquad \inv(X)=\mu^{-1}\bigg\{1,\frac12,\frac{1+i}{2},\frac{1-i}{2}\bigg\}=\frac{1}{2\mu^2} X.
\]
Since $X=\inv(X)$, we must have $2\mu^2=1$. 
On the other hand, $1\in X$ implies that $\mu \in \{1, \frac{1}{2}, \frac{1+i}{2}, \frac{1-i}{2}\}$, a contradiction.

Since $0\in V$, we have $0\in C$ and thus $\inv(C)$ is a line. It follows that $\psi(S^1)$ is a line. One checks that $X=(V\setminus \{0\}) \cup \{\mu\}$, $T(V\setminus \{0\})=G\setminus\{g_0\}$, and $T(\mu)=0$, thus
\[
T(X)=\bigl(G\setminus\{g_0\}\bigr)\ \cup\ \{0\}.
\]
For each $g\in G\setminus\{g_0\}$, we have $\psi(g)\in \psi(S^1)$; on the other hand, $\mu+\xi g\in X$, hence $\inv(\mu+\xi g)\in X$ and 
\[\psi(g)=T\bigl(\inv(\mu+\xi g)\bigr)\in T(X) \subseteq S^1 \cup \{0\}.\]
Since $\psi$ is injective, it follows that there are $m-2\ge 3$ distinct points contained in the intersection of a line with $S^1$,
a contradiction. 
\end{proof}

\section*{Acknowledgments}
S. Kim and C.H. Yip thank Institute for Basic Science for hospitality during their visit, where part of this project was discussed. 
C.H. Yip also thanks Swaroop Hegde, Giorgis Petridis, and Ilya Shkredov for helpful discussions. C.H. Yip was supported in part by an NSERC fellowship. 
S. Yoo was supported by the Institute for Basic Science (IBS-R029-C1). 

\bibliographystyle{abbrv}
\bibliography{references}

\end{document}